\renewcommand{\iff}{if and only if}
\newcommand{\mI}{\mathcal{IC}}
\newcommand{\ov}{\overline}
\newenvironment{proof}{\rm \trivlist\item[\hskip \labelsep{\it
Proof:}]}{{\hfill \scriptsize $\blacksquare$} \endtrivlist}
\newtheorem{prop}{Proposition}[section]
\newtheorem{lem}[prop]{Lemma}
\newtheorem{cor}[prop]{Corollary}
\def\softd{{\leavevmode\setbox1=\hbox{d}\hbox
									to 1.15\wd1{d\kern-0.2ex{\char039}\hss}}}
\begin{document}

\title{On the question of embedding a semigroup into an idempotent generated one}
\author{Lu\'\i s Oliveira}
\date{}

\maketitle

\begin{abstract}
In this paper we present a new embedding of a semigroup into a semiband (idempotent-generated semigroup) of depth 4 (every ele\-ment is the product of 4 idempotents) using a semidirect product construction. Our embedding does not assume that $S$ is a monoid (although it assumes a weaker condition), and works also for (non-monoid) regular semigroups. In fact, this semidirect product is particularly useful for regular semigroups since we can defined another embedding for these semigroups into a smaller semiband of depth 2. In this paper we shall compare our construction with other known embeddings, and we shall see that some properties of $S$ are preserved by it.  
\end{abstract}

\section{Introduction}

The reader should consult \cite{howiebook} for undefined terms used in this paper. Let $S$ be a semigroup. As usual, we shall denote by $S^1$ the monoid induced by $S$, that is, $S^1=S$ if $S$ is a monoid, or otherwise $S^1$ is the semigroup $S$ with an extra identity element adjoined. We shall denote by $1$ the identity element of $S^1$. Also, we shall denote by $E(S)$ the set of idempotents of $S$ (we shall write only $E$ if no ambiguity occurs) and by $\langle E(S)\rangle$ the subsemigroup of $S$ generated by its idempotents. A semigroup $S$ is a semiband if $S=\langle E\rangle$. If $E^n$ denotes the set of elements of $S$ that are the product of $n$ idempotents, then
\[\langle E\rangle =\cup_{n=1}^{\infty} E^n \, .\]
Clearly $E^n\subseteq E^{n+1}$ since we are talking about products of idempotents, and if $E^n=E^{n+1}$ then $\langle E\rangle = E^n$. The depth of an element $a\in \langle E\rangle$ is the positive integer $k$ such that $a\in E^k\setminus E^{k-1}$ ($E^0=\emptyset$). We shall say that a semiband $S$ has depth $k$ if $E^{k-1}\neq E^k =E^{k+1}$, and has infinite depth if $E^n\neq E^{n+1}$ for all $n$. Note that finite semibands have always finite depth.

There have been described in the literature several ways of embedding a semigroup into a semiband. The first such known embedding is due to Howie \cite{howie66} and it embeds a semigroup $S$ into the (regular) subsemigroup $\langle E(\mathcal{T}_X)\rangle$ of some full transformation semigroup $\mathcal{T}_X$. Howie's embedding allows to embed a semigroup of order $n$ (that is, with $n$ elements) into a regular semiband of order at most $(n+3)^{n+3}-(n+3)!\,$. Another embedding also involving an idempotent generated subsemigroup of some full transformation semigroup appeared in \cite{benzakenmayr} where it was attributed to Perrot. This later embedding was investigated by Pastijn \cite{pastijn77} who proved that the mentioned semiband has depth 4, and if $S$ has order $n$, then the semiband has order at most $2n^2+4n+1$. However, unlike Howie's embedding, this semiband is regular only when $S$ is regular.

To answer to a question posed by Howie in \cite{howie81}, Laffey \cite{laffey83} and Hall presented each one a way to embed a finite semigroup into a finite semiband of depth 2 (the proof of the latter appears in \cite{giraldeshowie}). Laffey's embedding is based on linear algebra techniques, while Hall's embedding makes use of the Rees matrix semigroup construction. Higgins \cite{higgins95} presented another proof that every semigroup can be embedded into a regular semiband of depth 2 in a way that preserves finiteness. Higgins' proof relies on embedding a full transformation semigroup $\mathcal{T}_X$ into a regular subsemiband of depth 2 of $\mathcal{T}_{X\cup X'}$ where $X'$ is another set disjoint from $X$ but with the same cardinality. 

In \cite{pastijnyan}, Pastijn and Yan introduced another way to embed a semigroup into a semiband of depth 4. This new semiband is given by a presentation and it follows closed to the results obtained in \cite{pastijn77}. The semiband used in \cite{pastijn77} can be seen as a quotient of the semiband presented in \cite{pastijnyan}. Petrich \cite{petrich04} presented another idempotent generated Rees matrix semigroup construction in which we can embed a given semigroup. This idempotent generated Rees matrix semigroup is isomorphic to the semigroup presented in \cite{pastijnyan}.

Recently, Almeida and Moura \cite{almeidamoura} studied the question of embedding a semigroup into a semiband inside some pseudovarieties of semigroups using some known embeddings, but also some profinite techniques. As an example, they were able to show that every finite semigroup $S$ whose regular $\mathcal{D}$-classes are aperiodic semigroups is embeddable into a semiband with the same cha\-racteristic using profinite techniques that do not give us an explicit semiband construction. All the known embeddings of a semigroup into a semiband do not preserve this property of $S$, and we must say that the new embeddings presented in this paper continue to fail to preserve this property of $S$. So, the question of finding an explicit semiband construction that allows to embed a semigroup whose regular $\mathcal{D}$-classes are aperiodic semigroups into a semiband with the same characteristic is still open.

In the following section we shall recall some of the embedding described above. In Section 3 we shall present a new embedding. We shall embed a semigroup $S$ with a certain property (which includes both monoids and regular semigroups) into a semiband $T(S)$ of depth 4 that is a subsemigroup of a semidirect product of $S\times S$ by $R_2$, where $R_2$ denotes the two element right-zero semigroup. Note that this embedding will work for any semigroup $S$ even if it does not have the referred property since we can always consider the monoid $S^1$ instead. The properties of $S$ preserved by $T(S)$ is the focus of study of Section 4.

The semidirect product introduced in Section 3 is particularly interesting if we consider only regular semigroups. If $S$ is a regular semigroup, we will embed $S$ into another subsemiband of the referred semidirect product that has only depth 2. This will be the topic of study of Section 5.

\section{Recalling some embeddings}\label{section2}

In this section we shall briefly describe some known embeddings of semigroups into semibands, but we will not do it chronologically. We begin with the Pastijn and Yan's embedding \cite{pastijnyan}. Let $\ov{X}=\{\ov{h}\}\cup\{\ov{s}\,:\;s\in S^1\}$ and consider the semigroup $F(S)$ given by the following presentation
\[F(S)=\langle\, \ov{X}\, |\; \ov{x}^2=\ov{x}\, ,\; \ov{h}\ov{1}=\ov{1}\, ,\; \ov{1}\ov{h}=\ov{h} \, ,\; \ov{s}\ov{t}=\ov{s} \, ,\; \ov{h}\ov{s}\ov{h}\ov{t}=\ov{h}\ov{st}\,\rangle\, ,\]
where $\ov{x}\in\ov{X}$ and $s,t\in S^1\,$. Then $F(S)$ is generated by the set of idempotents $\ov{X}$ and by \cite[Lemma 16]{pastijnyan}, the list
\[\ov{h}\, ,\;\ov{1} \, ,\;\ov{s} \, ,\;\ov{h}\ov{s} \, ,\;\ov{s}\ov{h} \, ,\;\ov{h}\ov{s}\ov{h} \, ,\; \ov{s}\ov{h}\ov{t} \, ,\; \ov{s}\ov{h}\ov{t}\ov{h}\, ,\qquad s,t\in S^1\setminus\{1\}\, ,\]
presents all elements of $F(S)$ (and the elements of this list are all pairwise distinct). Further, since $\ov{h}=\ov{1}\ov{h}\ov{1}\ov{h}$, $\ov{1}=\ov{1}\ov{h} \ov{1}$, $\ov{s}=\ov{s}\ov{h}\ov{1}$, $\ov{h}\ov{s}= \ov{1}\ov{h}\ov{s}$, $\ov{s}\ov{h}=\ov{s}\ov{h} \ov{1}\ov{h}$ and $\ov{h}\ov{s}\ov{h}=\ov{1}\ov{h}\ov{s} \ov{h}$, we conclude that
\[F(S)=\{\ov{s}\ov{h}\ov{t}\,,\;\ov{s}\ov{h}\ov{t}\ov{h}\, :\; s,t\in S^1\}\]
with any two elements of this set distinct from each other.

The mapping $S\rightarrow F(S)\, ,\; s\rightarrow \ov{h}\ov{s}=\ov{1}\ov{h}\ov{s}$ clearly embeds $S$ into the semiband $F(S)$ of depth 4. Note that if $S$ has order $n$ and it is a monoid, then $F(S)$ has order $2n^2$; but if is not a monoid, then $S$ is embeddable into the semiband $F(S)\setminus\{\ov{1}\}$ that has order $2n^2+4n+1$.   

The semiband used in \cite{pastijn77} to embed a semigroup into a semiband can be seen as a quotient of the semigroup $F(S)$. We can easily check that 
\[\rho=\{(\ov{s}\ov{h}\ov{t},\ov{r}\ov{h}\ov{t}),\, (\ov{s}\ov{h}\ov{t}\ov{h},\ov{r}\ov{h}\ov{t}\ov{h}):\, st=rt\,\mbox{ for }\; r,s,t\in S^1\}\]
is a congruence on $F(S)$. Let $A(S)=F(S)/\rho$. This quotient semigroup is isomorphic to the semigroup of \cite{pastijn77} denoted by $\widetilde{\mathcal{A}(S)}$. Clearly the mapping $S\rightarrow A(S),\; s\rightarrow \ov{h}\ov{s}\rho$ embeds $S$ into $A(S)$ since $\ov{h}\ov{s}\rho= \ov{h}\ov{r}\rho$ \iff\ $s=r$. However, in \cite{pastijn77}, it was used instead the following embedding: $S\rightarrow A(S)\, ,\; s\rightarrow \ov{h}\ov{s}\ov{h}\rho$.

Let $\Sigma=\{\sigma,\tau\}$ and consider the Rees matrix semigroup 
\[\Phi S=\mathcal{M}(S^1,S^1,\Sigma; Q)\]
where $Q=(q_{\alpha s})$ is the $\Sigma\times S^1$-matrix defined by $q_{\sigma s}=1$ and $q_{\tau s}=s$ for $s\in S^1$. By \cite[Theorem 3.1]{petrich04} there is an isomorphism $\chi$ from $F(S)$ onto $\Phi S$ such that $\ov{h}\chi =(1,1,\tau)$ and $\ov{s}\chi=(1,s,\sigma)$ for $s\in S^1$. Thus $\Phi S$ is a semiband of depth 4. Clearly the mapping $S\rightarrow \Phi S,\; s\rightarrow (1,s,\sigma)$ embeds $S$ into $\Phi S$. 

The last embedding we shall recall is the one introduced by Higgins \cite{higgins95} where the full transformation semigroup $\mathcal{T}_X$ is embedded into a subsemiband of the full transformation semigroup $\mathcal{T}_Y$ for $Y=X\cup X'$ and $X'$ a set disjoint from $X$ but with the same cardinality. Let $x\rightarrow x'$ be a bijection from $X$ onto $X'$ and, for each $\alpha\in\mathcal{T}_X$, define $\alpha'\in\mathcal{T}_Y$ by $x\alpha'=x\alpha=x'\alpha'$ for $x\in X$. Let also
\[T=\{\alpha\in\mathcal{T}_Y\, :\; X\alpha=X'\alpha\subseteq X \mbox{ or } X\alpha=X'\alpha\subseteq X'\,\}\, .\]
It was shown in \cite{higgins95} that $T$ is a regular subsemiband of $\mathcal{T}_Y$ of depth 2 and that the mapping $\mathcal{T}_X\rightarrow T,\; \alpha\rightarrow \alpha'$ embeds $\mathcal{T}_X$ into $T$.

\section{A new embedding}\label{section3}

Let $R$ and $T$ be two semigroups. A (left) action of $R$ on $T$ is a mapping $R\times T\rightarrow T,\; (a,t)\rightarrow {}^at$ 
such that ${}^a(ts)= {}^at\cdot {}^as$ and ${}^{ab}t= {}^a({}^bt)$ for all $a,b\in R$ and all $t,u\in T$. The semidirect product $T*R$ of $T$ by $R$ with respect to this action is the semigroup obtained by defining on $T\times R$ the product
\[(t,a)\cdot (u,b)=(t\cdot {}^au,ab)\qquad (a,b\in R,\; t,u\in T)\, .\]

Let $R_2=\{\sigma,\tau\}$ be the two element right-zero semigroup and let $T=S\times S$. Consider the mapping $R_2\times T\rightarrow T$ defined by
\[{}^\sigma (t,s)=(s,s)\quad\mbox{ and }\quad {}^\tau (t,s)=(t,t)\, .\]
Clearly this mapping is an action of $R_2$ on $T=S\times S$. Consider the semidirect product $T*R_2$ induced by this action. We shall look at the elements of $T*R_2$ as triples from $S\times S\times R_2$. Then
\[(t_1,s_1,\alpha_1)(t_2,s_2,\alpha_2)=\left\{\begin{array}{ll}
(t_1s_2\, ,\, s_1s_2\, ,\, \alpha_2) &\mbox{ if } \alpha_1=\sigma \\ [.2cm]
(t_1t_2\, ,\, s_1t_2\, ,\, \alpha_2) &\mbox{ if } \alpha_1=\tau
\end{array}\right.\]
for $t_1,t_2,s_1,s_2\in S$ and $\alpha_1,\alpha_2\in R_2$.

\begin{lem}\label{etr2}
$E(T*R_2)=\{(s,e,\sigma)\, ,\; (e,s,\tau)\,:\; e\in E(S) ,\, se=s\}\,$.
\end{lem}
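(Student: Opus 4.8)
The plan is to characterize the idempotents of $T*R_2$ directly by solving the equation $(t,s,\alpha)^2=(t,s,\alpha)$ using the multiplication rule given above. Since the third coordinate of any product $(t_1,s_1,\alpha_1)(t_2,s_2,\alpha_2)$ is always $\alpha_2$, an idempotent automatically satisfies the constraint on the $R_2$-component, so the real work is to split into the two cases $\alpha=\sigma$ and $\alpha=\tau$ and see what the first two coordinates must satisfy.

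First I would handle the case $\alpha=\sigma$. Computing $(t,s,\sigma)^2$ with the rule for $\alpha_1=\sigma$ gives $(t\cdot s,\, s\cdot s,\, \sigma)=(ts,s^2,\sigma)$. So $(t,s,\sigma)$ is idempotent if and only if $ts=t$ and $s^2=s$, i.e. $s=e$ is an idempotent of $S$ and $te=t$. This matches the first family in the statement once we rename the free first coordinate, writing the element as $(s,e,\sigma)$ with $e\in E(S)$ and $se=s$. The case $\alpha=\tau$ is entirely symmetric: computing $(t,s,\tau)^2$ with the rule for $\alpha_1=\tau$ gives $(t\cdot t,\, s\cdot t,\, \tau)=(t^2,st,\tau)$, so the conditions become $t^2=t$ and $st=s$, yielding the second family $(e,s,\tau)$ with $e\in E(S)$ and $se=s$.

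Combining the two cases gives exactly the set in the statement, so I would conclude by asserting the double inclusion. Each displayed element is idempotent by the computation just performed (this gives the containment $\supseteq$), and conversely any idempotent must fall into one of the two cases according to its third coordinate and hence satisfy the stated equations (this gives $\subseteq$).

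I do not expect any genuine obstacle here: the argument is a short direct verification driven by the multiplication formula. The only point requiring a little care is bookkeeping of which coordinate is free and which is constrained in each case, together with the cosmetic relabeling of the first coordinate so that the idempotent condition reads uniformly as $se=s$ with $e\in E(S)$ across both families; keeping the two cases clearly separated avoids any confusion there.
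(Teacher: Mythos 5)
Your proof is correct and takes essentially the same approach as the paper's: both compute $(t,s,\sigma)^2=(ts,s^2,\sigma)$ and $(t,s,\tau)^2=(t^2,st,\tau)$ directly from the multiplication rule and read off the idempotency conditions in each case. The paper's version is just a two-line compression of the same case analysis, so there is nothing to fix.
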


\begin{proof}
Clearly, since $(t,u,\sigma)^2=(tu,u^2,\sigma)$, $(t,u,\sigma)\in E(T*R_2)$ \iff\ $tu=t$ and $u\in E(S)$. Similarly, $(t,u,\tau)\in E(T*R_2)$ \iff\ $t\in E(S)$ and $ut=u$ because $(t,u,\tau)^2=(t^2,ut,\tau)$.
\end{proof}

For the purpose of this paper, we shall say that an element $s\in S$ is idempotent covered in $S$ if $se=s=fs$ for some $e,f\in E(S)$, that is, if $s\in Se\cap fS$ for some $e,f\in E(S)$. We shall say that a semigroup $S$ is idempotent covered if all elements of $S$ are idempotent covered in $S$. Note that all monoids and all regular semigroups are idempotent covered semigroups and that $S$ is an idempotent covered semigroup \iff\
\[S=\cup_{e\in E(S)} Se \quad\mbox{ and }\quad S=\cup_{e\in E(S)} eS\, ,\]
that is, \iff\ $S$ is both the union of its  idempotent generated principal left ideals and the union of its idempotent generated principal right ideals. We shall denote by $\mI$ the class of all idempotent covered semigroups.

Note that if $S\in\mI$ then the set $E(T*R_2)$ is not empty. If $s\in Se$ for some $e\in E(S)$, then $(e,s,\tau)$ and $(s,e,\sigma)$ are idempotents of $T*R_2$; and every idempotent of $T*R_2$ is obtained in this manner. Consider now the subset of idempotents 
\[E=\{(s,t,\alpha)\in E(T*R_2)\, :\; s\in S^1t\}\, ,\] 
and let $T(S)=\langle E\rangle$.

\begin{prop}
Let $S\in\mI$. Then:
\begin{itemize}
\item[$(i)$] $E=\{(s,e,\sigma):\, e\in E(S),\, se=s\}\cup\{(e,s,\tau) :\, e\in E(S),\, e\mathcal{L} s\}$.
\item[$(ii)$] $T(S)=\{(s,t,\alpha)\in T*R_2 :\, s\in S^1t\}$ is a semiband of depth 4 with $E(T(S))=E$.
\item[$(iii)$] The mapping $\varphi: S\rightarrow T(S) ,\, s\rightarrow (s,s,\sigma)$ embeds $S$ into $T(S)$.
\end{itemize}
\end{prop}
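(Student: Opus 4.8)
The plan is to prove the three assertions in the order $(i)$, then the structural core of $(ii)$, then $(iii)$.

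For $(i)$ I would start from Lemma \ref{etr2}, which already lists $E(T*R_2)$, and intersect that list with the defining condition $s\in S^1t$ of $E$. For a $\sigma$-idempotent $(s,e,\sigma)$ the condition $s\in S^1e$ is automatic because $s=se\in Se$, so every $\sigma$-idempotent of $T*R_2$ already lies in $E$, giving the first set. For a $\tau$-idempotent $(e,s,\tau)$ (so $e\in E(S)$ and $se=s$) the extra condition reads $e\in S^1s$; together with $s=se\in S^1e$ this says exactly $S^1e=S^1s$, i.e. $e\,\mathcal{L}\,s$. Conversely $e\,\mathcal{L}\,s$ with $e$ idempotent forces $se=s$, which yields the second set.

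For $(ii)$ write $U=\{(s,t,\alpha)\in T*R_2:\ s\in S^1t\}$. I would establish that $U$ is a subsemigroup, that $E(U)=E$, that $\langle E\rangle=U$, and that every element of $U$ is a product of at most four idempotents. Closure of $U$ is a short check on the two product formulas: writing the first coordinate of the left factor as $u$ times its second coordinate and substituting into the product rule shows the result again has first coordinate in $S^1\cdot(\text{second coordinate})$. Since $U$ is a subsemigroup and $E=E(T*R_2)\cap U$, one gets $E(U)=E(T*R_2)\cap U=E$ at once, which becomes $E(T(S))=E$ once $\langle E\rangle=U$ is known; and $\langle E\rangle\subseteq U$ is clear from $E\subseteq U$ and closure.

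The heart of the argument is the reverse inclusion with the depth bound, and here I would exploit that $S\in\mI$: every $b\in S$ has idempotents $f_b,e_b$ with $f_bb=b=be_b$. Using the product rule one checks the explicit factorisations
\[(a,b,\sigma)=(cf_b,f_b,\sigma)\,(f_b,f_b,\tau)\,(b,e_b,\sigma),\qquad (a,b,\tau)=(a,b,\sigma)\,(e_b,e_b,\tau),\]
valid whenever $a=cb$ with $c\in S^1$, that is, exactly when $(a,b,\alpha)\in U$; all displayed triples lie in $E$ by part $(i)$. Thus every $\sigma$-element of $U$ is a product of three idempotents and every $\tau$-element a product of four, so $U\subseteq E^4\subseteq\langle E\rangle$ and hence $\langle E\rangle=U=E^4=E^5$. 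I expect verifying these two identities (and that each factor is an idempotent of $E$) to be the main, if routine, obstacle; the subtlety is choosing the first factor $(cf_b,f_b,\sigma)$ so that $c$ is absorbed correctly while the second coordinate is steered to $b$.

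To see the bound $4$ is genuinely needed I would record the structural fact that any product $X_1\cdots X_k$ of idempotents equals $(a_1W,\,b_1W,\,\gamma_k)$, where $(a_1,b_1)$ are the coordinates of $X_1$ and $W\in S^1$ is a product of coordinates of the remaining factors, since both coordinates are multiplied on the right by the same elements. This reproves $\langle E\rangle\subseteq U$ and, applied to a suitable $S$ — for instance one with an element $x$ satisfying $x^2=0$ whose $\mathcal{L}$-class meets no idempotent — yields by a short case analysis that $(x,x,\tau)\in E^4\setminus E^3$, so the depth is exactly $4$ (the delicate point being that $a_1W=x$ would force a coordinate equal to $x$ in an idempotent slot or in an $\mathcal{L}$-to-idempotent slot, both impossible). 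Finally $(iii)$ is immediate: $\varphi$ is a homomorphism because $(s,s,\sigma)(t,t,\sigma)=(st,st,\sigma)$, it is injective by reading off the first coordinate, and $\varphi(s)\in T(S)$ since $(s,s,\sigma)\in U=T(S)$ (indeed $(s,s,\sigma)=(f_s,f_s,\tau)(s,e_s,\sigma)\in E^2$).
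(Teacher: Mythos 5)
Your proofs of $(i)$, of the structural core of $(ii)$, and of $(iii)$ are correct and essentially identical to the paper's: the paper proves $(i)$ by the same intersection argument with Lemma \ref{etr2}, proves $(ii)$ by checking that $A=\{(s,t,\alpha): s\in S^1t\}$ is a subsemigroup containing $E$, and then uses exactly your factorisations, namely $(s,t,\sigma)=(af,f,\sigma)(f,f,\tau)(t,e,\sigma)$ and $(s,t,\tau)=(s,t,\sigma)(e,e,\tau)$ for $t\in Se\cap fS$ and $s=at$, which coincide with yours up to renaming; $(iii)$ is then immediate in both treatments.

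Where you go beyond the paper is the claim that the bound $4$ is attained, and this part is flawed. First, note that the paper uses ``depth 4'' in the weak sense announced in the abstract (every element is a product of $4$ idempotents) and its proof establishes only $T(S)=E^4$; exact depth $4$ cannot hold for all $S\in\mI$, since for a group $S$ one has $(s,t,\sigma)=(1,ts^{-1},\tau)(s,1,\sigma)$ and $(s,t,\tau)=(st^{-1},1,\sigma)(1,t,\tau)$, so $T(S)=E^2$. Second, your proposed witness does not work: in the monoid $S=\{1,x,0\}$ with $x^2=0$, the $\mathcal{L}$-class of $x$ is $\{x\}$ and contains no idempotent, yet
\[(x,x,\tau)=(1,1,\tau)\,(x,1,\sigma)\,(1,1,\tau)\in E^3\, .\]
The ``delicate point'' in your case analysis is precisely where it breaks: the first coordinate of a $\sigma$-idempotent $(s,e,\sigma)$ is constrained only by $se=s$, so $x$ may legitimately occupy that slot; the configuration with types $\tau\sigma\tau$, in which the middle factor contributes $x$ through its first coordinate, is missed. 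Since the paper itself only proves $T(S)=E^4$, this does not undermine your proof of the proposition as the paper intends it, but the exactness argument should be deleted or replaced by a correct witness.
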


\begin{proof}
$(i)$. By Lemma \ref{etr2} the elements of $\{(s,e,\sigma) :\, e\in E(S),\, se=s\}\cup\{(e,s,\tau) :\, e\in E(S),\, e\mathcal{L} s\}$ are idempotents of $T*R_2$ that clearly belong to $E$. Further, if $(a,b,\sigma)\in E\subseteq E(T*R_2)$, then $b\in E(S)$ and $ab=a$. Finally, let $(a,b,\tau)\in E$. Then $a\in S^1b$ and once more by Lemma \ref{etr2}, $a\in E(S)$ and $ba=b$, whence $a\mathcal{L} b$. We have proved $(i)$.

$(ii)$. It is trivial to verify that $A=\{(s,t,\alpha)\in T*R_2 :\, s\in S^1t\}$ is a subsemigroup of $T*R_2$ containing $E$, and so it contains $T(S)$ also. Let $s,t\in S$ such that $s\in S^1t$. Then let $e,f\in E(S)$ and $a\in S^1$ such that $t\in Se\cap fS$ and $s=at$. Note that $(af,f,\sigma)$, $(f,f,\tau)$, $(t,e,\sigma)$ and $(e,e,\tau)$ are idempotents from $E$ such that
\[(s,t,\sigma)=(af,f,\sigma)(f,f,\tau)(t,e,\sigma)\quad\mbox{ and }\quad (s,t,\tau)=(s,t,\sigma)(e,e,\tau)\,\]
Thus $(s,t,\sigma),(s,t,\tau)\in T(S)$ and $T(S)=A$ is a semiband of depth 4. Finally, by the definition of $E$ and since $T(S)=A$, it is obvious that $E(T(S))=E$.

$(iii)$. The mapping $\varphi$ is well defined by $(ii)$ and it is now obvious that it embeds $S$ into $T(S)$.
\end{proof}

Note that if $S$ is not idempotent covered, then we can consider $S^1$ ins\-tead of $S$ and embed $S$ into $T(S^1)$. Thus we have another proof that every semigroup can be embedded into a semiband.

The following result compares the two semibands $T(S)$ and $A(S)$ of depth 4 in which we can embed $S\in\mI$.

\begin{prop}
$T(S)$ and $A(S)$ are isomorphic for every monoid $S$. If $S\in\mI$ is not a monoid, then $T(S)$ is isomorphic to the subsemigroup $A_1=\{(\ov{h}\ov{s})\rho,\, (\ov{h}\ov{s}\ov{h})\rho,\, (\ov{r}\ov{h}\ov{s})\rho,\, (\ov{r}\ov{h}\ov{s}\ov{h})\rho \,:\; r,s\in S\}$ of $A(S)$.
\end{prop}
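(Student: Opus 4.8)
The plan is to exhibit the explicit isomorphism suggested by comparing the two descriptions of the semigroups. Recall that every element of $A(S)$ is a $\rho$-class of a unique canonical form $\ov{s}\ov{h}\ov{t}$ (which I call $\sigma$-type) or $\ov{s}\ov{h}\ov{t}\ov{h}$ ($\tau$-type), with $s,t\in S^1$, and that by the definition of $\rho$ two forms of the same type are $\rho$-related \iff\ they share the same right factor $\ov{t}$ and the products $st$ agree. Thus a $\sigma$-type class is completely determined by the pair $(st,t)$, and likewise a $\tau$-type class by $(st,t)$; in both cases $st\in S^1t$. Since $T(S)=\{(p,t,\alpha):p\in S^1t\}$, these pairs match the elements of $T(S)$ exactly once the type is recorded by $\alpha$. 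I would therefore define
\[\theta:\ (\ov{s}\ov{h}\ov{t})\rho\mapsto (st,t,\sigma)\,,\qquad (\ov{s}\ov{h}\ov{t}\ov{h})\rho\mapsto (st,t,\tau)\,.\]

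First I would check $\theta$ is well defined and injective. Well-definedness is immediate from the description of $\rho$ above ($st=rt$ forces the same image), and injectivity follows because distinct classes yield distinct pairs $(st,t)$ or differ in type. For the codomain, note that if $t\in S$ then $S^1t\subseteq S$, so $(st,t,\alpha)\in S\times S\times R_2$ with $st\in S^1t$, giving membership in $T(S)$; conversely every element of $T(S)$ is hit, since $p\in S^1t$ means $p=st$ for some $s\in S^1$. Hence $\theta$ restricts to a bijection from the set of classes whose right factor lies in $S$ onto $T(S)$. This set of classes is precisely $A_1$: using $\ov{h}\ov{s}=\ov{1}\ov{h}\ov{s}$ and $\ov{h}\ov{s}\ov{h}=\ov{1}\ov{h}\ov{s}\ov{h}$, the four families defining $A_1$ are exactly the classes $(\ov{a}\ov{h}\ov{t})\rho$ and $(\ov{a}\ov{h}\ov{t}\ov{h})\rho$ with $a\in S^1$ and $t\in S$.

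The heart of the proof is verifying that $\theta$ is a homomorphism, and this is where the main work lies. Using the defining relations of $F(S)$ — in particular $\ov{t}\ov{u}=\ov{t}$ and $\ov{h}\ov{t}\ov{h}\ov{u}=\ov{h}\ov{tu}$ — one reduces each product of canonical forms back to canonical form:
\begin{align*}
(\ov{s}\ov{h}\ov{t})(\ov{u}\ov{h}\ov{v})&=\ov{s}\ov{h}\ov{tv}, & (\ov{s}\ov{h}\ov{t})(\ov{u}\ov{h}\ov{v}\ov{h})&=\ov{s}\ov{h}\ov{tv}\ov{h},\\
(\ov{s}\ov{h}\ov{t}\ov{h})(\ov{u}\ov{h}\ov{v})&=\ov{s}\ov{h}\ov{tuv}, & (\ov{s}\ov{h}\ov{t}\ov{h})(\ov{u}\ov{h}\ov{v}\ov{h})&=\ov{s}\ov{h}\ov{tuv}\ov{h}.
\end{align*}
Applying $\theta$ to the right-hand sides gives $(stv,tv,\sigma)$, $(stv,tv,\tau)$, $(stuv,tuv,\sigma)$ and $(stuv,tuv,\tau)$. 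On the other side I would compute the corresponding products in $T*R_2$, recalling that a left factor tagged $\sigma$ overwrites the second coordinate of the first triple by the right action and a left factor tagged $\tau$ overwrites its first coordinate, and check the same four triples result. For instance $(st,t,\sigma)(uv,v,\sigma)=(st\cdot v,\,t\cdot v,\sigma)=(stv,tv,\sigma)$ matches the first case, and the remaining three are analogous short computations. This shows $\theta\colon A_1\to T(S)$ is an isomorphism.

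Finally, for a monoid $S$ we have $S^1=S$, so the constraint $t\in S$ is vacuous and $A_1=A(S)$; the same map then yields $A(S)\cong T(S)$. The main obstacle is purely the bookkeeping in the homomorphism check: one must track correctly which coordinate of a triple is overwritten by the $R_2$-action according to the left tag $\alpha_1$, and match it against the reductions in $F(S)$. Everything else follows directly from the explicit parametrizations of the two semigroups.
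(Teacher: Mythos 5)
Your proposal is correct and is essentially the paper's own argument run in the opposite direction: your map $\theta$ is exactly the inverse $\psi^{-1}$ of the isomorphism $\psi:T(S^1)\longrightarrow A(S)$, $(s,t,\sigma)\mapsto(\ov{a}\ov{h}\ov{t})\rho$ with $s=at$, that the paper constructs (and whose inverse it records explicitly at the end of Section~\ref{section3}), with the same identification of $A_1$ as the set of classes whose right factor lies in $S$. The only difference is cosmetic: you spell out the canonical-form products in $F(S)$ for the homomorphism check, which the paper leaves to the reader as four straightforward cases.
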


\begin{proof}
Define $\psi:T(S^1)\rightarrow A(S)$ as follows:
\[(s,t,\sigma)\psi=(\ov{a}\ov{h}\ov{t})\rho \quad\mbox{ and }\quad (s,t,\tau)\psi= (\ov{a}\ov{h}\ov{t}\ov{h})\rho\, ,\]
where $a\in S^1$ is such that $s=at$. Note that if $s=bt$ for some $b\in S^1$, then $(\ov{a}\ov{h}\ov{t})\rho= (\ov{b}\ov{h}\ov{t})\rho$ and $(\ov{a}\ov{h}\ov{t}\ov{h})\rho= (\ov{b}\ov{h}\ov{t} \ov{h})\rho$ by definition of $\rho$, and so $\psi$ is well defined. Further, the description of $A(S)$ also allows us to conclude immediately that $\psi$ is a bijection. Finally, to prove that $\psi$ is an isomorphism we must check that
\[((s_1,t_1,\alpha_1)(s_2,t_2,\alpha_2))\psi = (s_1,t_1,\alpha_1)\psi\cdot(s_2,t_2,\alpha_2)\psi\]
for any $(s_1,t_1,\alpha_1),\,(s_2,t_2,\alpha_2)\in T(S^1)$. We have four straightforward cases depending on whether $\alpha_1$ and $\alpha_2$ are $\sigma$ and/or $\tau$ that we leave to the reader to verify.
 
We can now conclude that $T(S)$ and $A(S)$ are isomorphic if $S$ is a monoid. If $S\in\mI$ is not a monoid, then $T(S)=\{(s,t,\alpha)\in T(S^1) \,:\; t\in S\}$. Observe now that for $t\in S$,
\[(s,t,\sigma)\psi=\left\{\begin{array}{ll}
(\ov{h}\ov{t})\rho & \mbox{ if } s=t \\ [.2cm]
(\ov{r}\ov{h}\ov{t})\rho & \mbox{ if } s\neq t \mbox{ and } s=rt \; ,\end{array}\right.\]
and
\[(s,t,\tau)\psi=\left\{\begin{array}{ll}
(\ov{h}\ov{t}\ov{h})\rho & \mbox{ if } s=t \\ [.2cm]
(\ov{r}\ov{h}\ov{t}\ov{h})\rho & \mbox{ if } s\neq t \mbox{ and } s=rt \; .\end{array}\right.\]
So $(T(S))\psi =A_1$ and $A_1$ is a subsemigroup of $A(S)$ isomorphic to $T(S)$.
\end{proof}

Note that the inverse isomorphism $\psi^{-1}:A(S) \longrightarrow T(S^1)$ is defined by
\[((\ov{a}\ov{h}\ov{t})\rho)\psi^{-1}=(at,t,\sigma) \quad\mbox{ and }\quad ((\ov{a}\ov{h}\ov{t}\ov{h}) \rho)\psi^{-1}=(at,t,\tau)\, .\]
We will mention $\psi^{-1}$ again in the last section of this paper.

\section{Properties preserved by $T(S)$}

In this section we shall see that some properties of $S\in\mI$ are preserved by $T(S)$.

\begin{prop}\label{finperreg}
A semigroup $S\in\mI$ is finite, periodic or regular \iff\ the semiband $T(S)$ is respectively finite, periodic or regular.
\end{prop}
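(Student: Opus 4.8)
The plan is to establish the three equivalences one at a time, noting first that whenever a property is inherited by subsemigroups the corresponding backward implication is automatic from the embedding $\varphi\colon S\to T(S)$ of the previous proposition, since $\varphi(S)$ is a subsemigroup of $T(S)$ isomorphic to $S$. The \emph{finiteness} case is then immediate in both directions: since $T(S)\subseteq S\times S\times R_2$ we have $|T(S)|\le 2|S|^2$, so $S$ finite forces $T(S)$ finite, while $\varphi$ gives $|S|\le |T(S)|$, so $T(S)$ finite forces $S$ finite.

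For \emph{periodicity} the backward implication is automatic, as periodicity passes to subsemigroups. For the forward implication I would compute powers directly from the multiplication of $T*R_2$; a one-line induction should yield
\[(s,t,\sigma)^{n}=(st^{n-1},t^{n},\sigma)\qquad\mbox{and}\qquad(s,t,\tau)^{n}=(s^{n},ts^{n-1},\tau)\, .\]
If $S$ is periodic then $\{t^{n}\}$ and $\{s^{n}\}$ are finite, hence so are $\{st^{n-1}\}$ and $\{ts^{n-1}\}$, so every cyclic subsemigroup of $T(S)$ is finite and $T(S)$ is periodic.

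\emph{Regularity} is the substantial case. For the forward implication, assuming $S$ regular and given an arbitrary $(s,t,\alpha)\in T(S)$, I would choose an inverse $t'$ of $t$ in $S$ and test $y=\varphi(t')=(t',t',\sigma)$. Writing $s=at$ with $a\in S^{1}$ (possible as $s\in S^{1}t$), a direct check in each of the cases $\alpha=\sigma$ and $\alpha=\tau$ should reduce $(s,t,\alpha)\,y\,(s,t,\alpha)$ to $(s,t,\alpha)$, using only $tt't=t$ and the consequent $st't=att't=at=s$; thus every element of $T(S)$ is regular. The backward implication is the one that does not come from the embedding, because regularity is \emph{not} inherited by subsemigroups. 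Here I would exploit the special shape of the image: for $s\in S$ take an inverse $y=(u,v,\beta)\in T(S)$ of $\varphi(s)=(s,s,\sigma)$ and compute $\varphi(s)\,y\,\varphi(s)$. In both cases $\beta=\sigma$ and $\beta=\tau$ the product collapses to $(svs,svs,\sigma)$, so the equation $\varphi(s)\,y\,\varphi(s)=\varphi(s)$ forces $svs=s$ with $v\in S$; hence $s$ is regular in $S$ and $S$ is regular.

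The step I expect to be the main obstacle is precisely this backward implication for regularity. Since a subsemigroup of a regular semigroup need not be regular, the embedding $\varphi$ gives no information there, and the proof must instead rely on the observation that a product $\varphi(s)\,y\,\varphi(s)$ degenerates so that both coordinates become $svs$, independently of the middle factor $y$ and of $\beta$; once this collapse is spotted the conclusion is immediate, and the remaining verifications are routine applications of the two multiplication rules.
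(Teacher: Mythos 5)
Your proof is correct and takes essentially the same route as the paper: the same power formulas $(s,t,\sigma)^n=(st^{n-1},t^n,\sigma)$ and $(s,t,\tau)^n=(s^n,ts^{n-1},\tau)$ handle periodicity, and the same witness $(t',t',\sigma)$ handles regularity. Your explicit collapse computation $\varphi(s)\,y\,\varphi(s)=(svs,svs,\sigma)$ for an arbitrary $y=(u,v,\beta)$ simply spells out the backward regularity implication that the paper's one-line equivalence leaves to the reader, which is a welcome addition rather than a different approach.
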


\begin{proof}
The finiteness case is obvious. Let $s,t\in S$ such that $s\in S^1t$. Since $\langle (s,t,\sigma)\rangle=\{(st^{n-1},t^n,\sigma)\, :\; n\geq 1\}$, we know that $\langle (s,t,\sigma)\rangle$ is finite \iff\ $\langle t\rangle$ is finite. Similarly, since $\langle (s,t,\tau)\rangle= \{(s^n,ts^{n-1},\tau)\, :\; n\geq 1\}$, we know that $\langle (s,t,\tau)\rangle$ is finite \iff\ $\langle s\rangle$ is finite. Thus $S$ is periodic \iff\ $T(S)$ is periodic. For the regularity case, observe that $tt't=t$ for some $t'\in S$ \iff\ $(s,t,\alpha) (t',t',\sigma)(s,t,\alpha)=(s,t,\alpha)$ for $\alpha\in R_2$; and so $S$ is regular \iff\ $T(S)$ is regular. 
\end{proof}

The next lemma will be useful to characterize the Green's relations on $T(S)$.

\begin{lem}\label{pre-green}
Let $S\in\mI$ and let $(s,t,\alpha),\,(u,v,\beta)\in T(S)$ such that $u=av$ for some $a\in S^1$.
\begin{itemize}
\item[$(i)$] $(s,t,\tau)\,\mathcal{R}\,(s,t,\sigma)$.
\item[$(ii)$] If $(s,t,\alpha)\,\mathcal{L}\,(u,v,\beta)$, then $\alpha=\beta$.
\item[$(iii)$] $(s,t,\tau)\,\mathcal{L}\,(u,v,\tau)$ \iff\ $(s,t,\sigma)\,\mathcal{L}\,(u,v,\sigma)$.
\item[$(iv)$] $(s,t,\sigma)\,\mathcal{L}\,(u,v,\sigma)$ \iff\ there exist $(a,b,\sigma),\,(c,d,\sigma)\in T(S)$ such that $(u,v,\sigma)=(a,b,\sigma)(s,t,\sigma)$ and $(s,t,\sigma)= (c,d,\sigma)(u,v,\sigma)$.
\end{itemize}
\end{lem}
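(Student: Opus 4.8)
The plan is to reduce both the Green's-relation statements and the factorisation statement to two elementary remarks about the product in $T(S)$, together with idempotent covering. Writing a generic element of $T(S)$ as $(s,t,\gamma)$, the condition $s\in S^1t$ lets me fix $\lambda\in S^1$ with $s=\lambda t$, and since $S\in\mI$ the element $t$ is idempotent covered, so I may also fix $e,f\in E(S)$ with $te=t$ and $ft=t$; by Lemma~\ref{etr2} the triples $(e,e,\sigma)$ and $(e,e,\tau)$ lie in $E\subseteq T(S)$. The two remarks I would record are: (a) in any product the third coordinate is inherited from the right-hand factor, so $(x,y,\gamma)(u,v,\beta)$ always has third coordinate $\beta$; and (b) for a fixed left factor the first two coordinates of the product do not depend on the third coordinate of the right factor, so $w(s,t,\sigma)$ and $w(s,t,\tau)$ agree in their first two coordinates for every $w\in T(S)$.

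Parts $(i)$ and $(ii)$ are then immediate. For $(i)$ I would compute, using $se=\lambda te=\lambda t=s$, the products $(s,t,\sigma)(e,e,\tau)=(se,te,\tau)=(s,t,\tau)$ and $(s,t,\tau)(e,e,\sigma)=(se,te,\sigma)=(s,t,\sigma)$; as both multipliers lie in $T(S)$, each of the two triples lies in the principal right ideal generated by the other, giving the $\mathcal{R}$-relation. For $(ii)$, if $(s,t,\alpha)\in T(S)^1(u,v,\beta)$ then either $(s,t,\alpha)=(u,v,\beta)$, or $(s,t,\alpha)$ is a genuine product with right factor $(u,v,\beta)$; by remark (a) the third coordinate forces $\alpha=\beta$ in both cases.

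For $(iii)$ I would exploit remark (b). Assuming $(s,t,\sigma)\,\mathcal{L}\,(u,v,\sigma)$, write $(u,v,\sigma)=w(s,t,\sigma)$ with $w\in T(S)^1$. If $w\in T(S)$ then $w(s,t,\tau)$ shares its first two coordinates with $(u,v,\sigma)$, hence equals $(u,v,\tau)$, so $(u,v,\tau)\in T(S)(s,t,\tau)$; if $w=1$ the conclusion is trivial. Doing the same for the factorisation of $(s,t,\sigma)$ through $(u,v,\sigma)$ yields $(s,t,\tau)\,\mathcal{L}\,(u,v,\tau)$, and the reverse implication is obtained by exchanging the roles of $\sigma$ and $\tau$.

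The heart of the lemma is the \emph{only if} part of $(iv)$, the \emph{if} part being immediate since any such factorisation places each triple in the principal left ideal of the other. The claim I would isolate is that every left multiple of a $\sigma$-element already arises from a $\sigma$-multiplier in $T(S)$, i.e.\ $T(S)^1(s,t,\sigma)=\{(a,b,\sigma)(s,t,\sigma):(a,b,\sigma)\in T(S)\}$. For the nontrivial inclusion I would take $w\in T(S)^1$ and distinguish three cases. If $w=(x,y,\sigma)$ there is nothing to do; if $w=1$ I would use $f$ and set $(a,b,\sigma)=(\lambda f,f,\sigma)$, for which $(a,b,\sigma)(s,t,\sigma)=(\lambda ft,ft,\sigma)=(s,t,\sigma)$. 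The main obstacle is the case $w=(x,y,\tau)$, where $w(s,t,\sigma)=(xs,ys,\sigma)$: here I would rewrite the $\tau$-multiplier as a $\sigma$-multiplier by putting $(a,b,\sigma)=(x\lambda,y\lambda,\sigma)$ and checking, via $\lambda t=s$, that $(x\lambda,y\lambda,\sigma)(s,t,\sigma)=(xs,ys,\sigma)$, together with $x\lambda\in S^1(y\lambda)$ (which follows from $x\in S^1y$) so that the multiplier indeed lies in $T(S)$. This is precisely the step where the defining condition $s\in S^1t$ of $T(S)$ is essential. Applying the claim to the two factorisations witnessing $(s,t,\sigma)\,\mathcal{L}\,(u,v,\sigma)$ then supplies the required $\sigma$-multipliers $(a,b,\sigma)$ and $(c,d,\sigma)$, finishing $(iv)$.
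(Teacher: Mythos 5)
Your proof is correct and takes essentially the same route as the paper's: the same idempotent-cover computation for $(i)$, the same observations that the third coordinate of a product comes from the right factor (for $(ii)$) and that the first two coordinates of a product do not see the right factor's third coordinate (for $(iii)$), and for $(iv)$ the same replacement of a left $\tau$-multiplier $(x,y,\tau)$ by the $\sigma$-multiplier $(x\lambda,y\lambda,\sigma)$ via a factorisation $s=\lambda t$, which is exactly the paper's identity $(a,b,\tau)(c,d,\alpha)=(ac_1,bc_1,\sigma)(c,d,\alpha)$ with $c=c_1d$. Your write-up is simply more explicit than the paper's (handling the $w=1$ case and verifying that the new multipliers lie in $T(S)$), but the underlying argument is identical.
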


\begin{proof}
The proof of $(i)$ is trivial since if $e\in E(S)$ is such that $te=t$, then $(s,t,\tau)(e,e,\sigma)=(s,t,\sigma)$ and $(s,t,\sigma)(e,e,\tau)=(s,t,\tau)$. Statement $(ii)$ is also obvious since $(a,b,\gamma)(s,t,\alpha)=(u,v,\beta)$ only occurs if $\alpha=\beta$. Statement $(iii)$ follows from the fact that $(a,b,\alpha)(a_1,b_1,\tau)=(a_2,b_2,\tau)$ \iff\ $(a,b,\alpha)(a_1,b_1,\sigma)=(a_2,b_2,\sigma)$. Finally, the statement $(iv)$ holds true since $(a,b,\tau)(c,d,\alpha) = (ac_1,bc_1,\sigma)(c,d,\alpha)$ for $c=c_1d$ in $S^1$.
\end{proof}

The natural partial order $\leq$ on a semigroup $S$ is defined by
\[s\leq t \quad\mbox{ \iff\ }\quad s=at=tb=sb \;\mbox{ for some } a,b\in S\, .\]
This natural partial order was introduced by Mitsch \cite{mitsch86} for any semigroup and it generalizes the more usual natural partial order for regular semigroups introduced independently by Hartwig \cite{hartwig80} and Nambooripad \cite{nambooripad80}. The next result describes the natural partial order and the Green's relations on $T(S)$. We shall use the notation $a(s,t)$ to represent the pair $(as,at)$ for $a\in S^1$ and $(s,t)\in S\times S$.

\begin{prop}\label{greenTS}
Let $S\in \mI$ and $(s,t,\alpha),\, (u,v,\beta)\in T(S)$. 
\begin{itemize}
\item[$(i)$] $(s,t,\alpha)\,\mathcal{R}\,(u,v,\beta)$ \iff\ $t\mathcal{R} v$ and $(s,u)=a(t,v)$ for some $a\in S^1$. 
\item[$(ii)$] $(s,t,\alpha)\,\mathcal{L}\,(u,v,\beta)$ \iff\ $\alpha=\beta$ and $t\mathcal{L}v$.
\item[$(iii)$] $(s,t,\alpha)\,\mathcal{H}\,(u,v,\beta)$ \iff\ $\alpha=\beta$, $t\mathcal{H}v$ and $(s,u)=a(t,v)$ for some $a\in S^1$.
\item[$(iv)$] $(s,t,\alpha)\,\mathcal{D}\,(u,v,\beta)$ \iff\ $t\mathcal{D}v$.
\item[$(v)$] $(s,t,\alpha)\,\mathcal{J}\,(u,v,\beta)$ \iff\ $t\mathcal{J}v$.
\item[$(vi)$] $(s,t,\alpha)\,\leq\,(u,v,\beta)$ \iff\ $\alpha=\beta$, $t\leq v$ and $(s,u)=a(t,v)$ for some $a\in S^1$.
\end{itemize}
\end{prop}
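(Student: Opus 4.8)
The plan is to establish all six characterizations by working directly with the product formula for $T*R_2$ and leaning heavily on the preparatory Lemma~\ref{pre-green}, which already isolates the role of the third coordinate $\alpha$. I would treat the statements in the order $(ii)$, $(i)$, then $(iii)$--$(vi)$, since the $\mathcal{L}$-relation is the cleanest and feeds into the rest.

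\textbf{The $\mathcal{L}$-relation $(ii)$.} First I would prove the forward direction: if $(s,t,\alpha)\,\mathcal{L}\,(u,v,\beta)$, then $\alpha=\beta$ is immediate from Lemma~\ref{pre-green}$(ii)$, and by parts $(iii)$ and $(iv)$ of that lemma it suffices to treat the $\sigma$ case and extract $t\mathcal{L}v$ from the multiplier structure. Concretely, left-multiplying $(s,t,\sigma)$ by an element $(a,b,\gamma)$ affects the second coordinate as $t\mapsto bt$ (if $\gamma=\sigma$) or $t\mapsto bt$ again after normalizing via Lemma~\ref{pre-green}$(iv)$, so the second coordinates satisfy $v\in S^1t$ and $t\in S^1v$, giving $t\mathcal{L}v$. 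For the converse, given $t\mathcal{L}v$ with $v=pt$, $t=qv$, I would exhibit explicit left multipliers in $E\langle E\rangle$ (built as in the depth-4 proposition using idempotents $e,f$ covering $t$ and $v$) that convert one triple into the other while keeping $\alpha$ fixed; the key is that the first coordinate can be steered freely once the second is in the right $\mathcal{L}$-class.

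\textbf{The $\mathcal{R}$-relation $(i)$.} By Lemma~\ref{pre-green}$(i)$ the third coordinate is irrelevant on the right, so I would reduce to comparing $(s,t,\sigma)$ and $(u,v,\sigma)$. Right-multiplying $(s,t,\sigma)$ by $(x,y,\delta)$ sends $(s,t)\mapsto(sy,ty)$, so reachability of $(u,v,\sigma)$ forces $v\in tS^1$ and, symmetrically, $t\in vS^1$, i.e.\ $t\mathcal{R}v$; moreover the pair $(u,v)$ is obtained from $(t,v)$-data by a common left factor, which is exactly the condition $(s,u)=a(t,v)$ for some $a\in S^1$. The converse direction constructs explicit right multipliers realizing $t\mathcal{R}v$ and matching first coordinates. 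Then $(iii)$ is just the conjunction $\mathcal{H}=\mathcal{R}\cap\mathcal{L}$, combining $(i)$ and $(ii)$ and noting the two ``$(s,u)=a(t,v)$'' conditions coincide.

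\textbf{The $\mathcal{D}$, $\mathcal{J}$, and order statements $(iv)$--$(vi)$.} For $(iv)$, the forward direction follows from $(i)$ and $(ii)$ since $\mathcal{D}=\mathcal{R}\circ\mathcal{L}$ projects onto $t\mathcal{D}v$ in the second coordinate; for the converse, given $t\mathcal{D}v$ I would pick an intermediate $w$ with $t\mathcal{R}w\mathcal{L}v$ and lift it to a triple $(\cdot,w,\cdot)$ that is $\mathcal{R}$-related to $(s,t,\alpha)$ and $\mathcal{L}$-related to $(u,v,\beta)$, using $(i)$ and $(ii)$ to check both links. Statement $(v)$ reduces to $(iv)$ once I observe that the principal ideals in $T(S)$ act on the second coordinate exactly as in $S$, so $\mathcal{J}$ collapses to $\mathcal{D}$ on the second-coordinate image, which already has $\mathcal{D}=\mathcal{J}$ issues only in the infinite case but the second-coordinate condition $t\mathcal{J}v$ is precisely what the ideal computation returns. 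Finally $(vi)$ I would verify by unwinding Mitsch's definition $x=ay=yb=xb$ directly in coordinates: the constraint $x=yb$ forces $\alpha=\beta$ and $t\leq v$ in $S$, while $x=ay$ and $x=xb$ together yield the factorization $(s,u)=a(t,v)$. The main obstacle I anticipate is $(vi)$: Mitsch's order involves a two-sided system of equations, and checking that the three equations in $T(S)$ are simultaneously solvable exactly when $t\leq v$ in $S$ together with the affine condition on first coordinates will require a careful case analysis on $\alpha,\beta$ and a genuine use of the idempotent-covered hypothesis to produce the witnessing elements in $S^1$.
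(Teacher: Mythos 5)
Your proposal is correct and follows essentially the same route as the paper's own proof: normalize the third coordinate via Lemma~\ref{pre-green}, read off $(i)$ and $(ii)$ directly from the product formula (with the common factor in $(s,u)=a(t,v)$ supplied by the membership conditions $s\in S^1t$ and $u\in S^1v$), obtain $(iii)$ and $(iv)$ by composition, reduce $(v)$ through $(iv)$ to a principal-ideal computation on the diagonal elements $(t,t,\sigma)$, $(v,v,\sigma)$, and unwind Mitsch's definition coordinatewise with a case analysis on $\alpha$ for $(vi)$. The only cosmetic deviations are in $(vi)$, where you misattribute which equation forces what (it is $x=ay$ that forces $\alpha=\beta$, and the factorization $(s,u)=a(t,v)$ falls out of $u\in S^1v$ combined with $s=ud$ and $t=vd$), and where the extra appeal to the idempotent-covered hypothesis you anticipate is not actually needed beyond what Lemma~\ref{pre-green} already encodes.
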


\begin{proof}
$(i)$. Assume $(s,t,\alpha)\,\mathcal{R}\,(u,v,\beta)$. Then  $(s,t,\sigma)\,\mathcal{R}\, (u,v,\sigma)$ by Lemma \ref{pre-green}.$(i)$ and there exist $(u_1,v_1,\sigma), (s_1,t_1,\sigma)\in T(S)$ such that
\[(u,v,\sigma)=(s,t,\sigma)(u_1,v_1,\sigma)=(sv_1,tv_1,\sigma)\]
and
\[(s,t,\sigma)=(u,v,\sigma)(s_1,t_1,\sigma)=(ut_1,vt_1,\sigma)\, .\]
Thus $v\mathcal{R}t$. If $a\in S^1$ is such that $s=at$, then $u=sv_1=atv_1=av$.

Assume now that $t\mathcal{L} v$ and $(s,u)=a(t,v)$ for some $a\in S^1$. Let $v_1,t_1\in S^1$ such that $t=vt_1$ and $v=tv_1$. Then
\[(s,t,\sigma)(v_1,v_1,\sigma)=(sv_1,tv_1,\sigma)=(atv_1,tv_1, \sigma)=(av,v,\sigma)=(u,v,\sigma)\]
and
\[(u,v,\sigma)(t_1,t_1,\sigma)=(ut_1,vt_1,\sigma)=(avt_1,vt_1, \sigma)=(at,t,\sigma)=(s,t,\sigma)\, .\]
Again by Lemma \ref{pre-green}.$(i)$ we conclude that $(s,t,\alpha)\,\mathcal{R}\, (u,v,\beta)$.

$(ii)$. Assume $(s,t,\alpha)\,\mathcal{L}\,(u,v,\beta)$. Then $\alpha=\beta$ and $(s,t,\sigma)\,\mathcal{L}\, (u,v,\sigma)$ by $(ii)$ and $(iii)$ of Lemma \ref{pre-green}, and by $(iv)$ of the same lemma there exist $(u_1,v_1,\sigma), (s_1,t_1,\sigma)\in T(S)$ such that
\[(u,v,\sigma)=(u_1,v_1,\sigma)(s,t,\sigma)=(u_1t,v_1t,\sigma)\]
and
\[(s,t,\sigma)=(s_1,t_1,\sigma)(u,v,\sigma)=(s_1v,t_1v,\sigma)\, .\]
Thus $t\mathcal{L}v$.

Assume now that $\alpha=\beta$ and $t\mathcal{L}v$. Let $a,b,s_1,u_1\in S^1$ such that $v=at$, $t=bv$, $s=s_1t$ and $u=u_1v$. Then
\[(u_1a,a,\sigma)(s,t,\sigma)=(u_1at,at,\sigma)=(u,v,\sigma)\]
and
\[(s_1b,b,\sigma)(u,v,\sigma)=(s_1bv,bv,\sigma)=(s,t,\sigma)\, ,\]
and by Lemma \ref{pre-green}.$(ii)$ and $(iii)$ we conclude that $(s,t,\alpha)\,\mathcal{L}\,(u,v,\beta)$.

$(iii)$. The statement $(iii)$ follows now from $(i)$ and $(ii)$.

$(iv)$. Assume $(s,t,\alpha)\,\mathcal{D}\,(u,v,\beta)$ and let $(s_1,t_1,\alpha_1)\in T(S)$ such that
\[(s,t,\alpha)\,\mathcal{R}\, (s_1,t_1,\alpha_1)\, \mathcal{L} \, (u,v,\beta)\, .\] 
By $(i)$ and $(ii)$ we must have $t\mathcal{R} t_1\mathcal{L} v$, and so $t\mathcal{D} v$.

Conversely, assume $t\mathcal{D} v$ and let $t_1\in S$ such that $t\mathcal{R} t_1\mathcal{L} v$. Let $s=at$ for some $a\in S^1$. Observe now that $(s,t,\alpha)\,\mathcal{R}\, (at_1,t_1,\beta)$ by $(i)$, and that $(at_1,t_1,\beta)\,\mathcal{L}\, (u,v,\beta)$ by $(ii)$. Hence
$(s,t,\alpha)\,\mathcal{D}\, (u,v,\beta)$.

$(v)$. By $(iv)$ it is enough to show that $(t,t,\sigma)\, \mathcal{J}\,(v,v,\sigma)$ \iff\ $t\mathcal{J} v$. But this is obvious since $a=bc$ for $a,b,c\in S$ \iff\ $(a,a,\sigma)=(b,b,\sigma)(c,c,\sigma)$.

$(vi)$. Assume $(s,t,\alpha)\,\leq\,(u,v,\beta)$ and let $(a,b,\gamma),\,(c,d,\delta)\in T(S)$ such that
\[(s,t,\alpha)=(a,b,\gamma)(u,v,\beta)=(u,v,\beta)(c,d,\delta) =(s,t,\alpha)(c,d,\delta)\, .\]
Then $\alpha=\beta=\delta$. We shall assume that $\alpha=\sigma$ and prove only this case since the case $\alpha=\tau$ is shown similarly. Then $t=vd=td$ and $s=ud=sd$. Let $u_1\in S^1$ such that $u=u_1v$. Then $s=ud=u_1vd=u_1t$ and $(s,u)=u_1(t,v)$. If $\gamma=\sigma$, then $t=bv$ and $t\leq v$. If $\gamma=\tau$, then $t=bu=bu_1v$ and $t\leq v$ again. We have shown the direct implication.

Assume now that $\alpha=\beta$, $t\leq v$ and $(s,u)=a(t,v)$, and let $b,c\in S^1$ such that $t=bv=vc=tc$. Once again we have two cases to consider, $\alpha=\sigma$ and $\alpha=\tau$, but since they are similar we shall prove only one. Thus assume that $\alpha=\sigma$. Then
\[\begin{array}{l}
(ab,b,\sigma)(u,v,\sigma)=(abv,bv,\sigma)=(at,t,\sigma)=(s,t,\sigma)\, , \\ [.3cm]
 (u,v,\sigma)(c,c,\sigma)=(uc,vc,\sigma)=(avc,vc,\sigma)= (at,t,\sigma)=(s,t,\sigma)\, ,\\ [.3cm]
(s,t,\sigma)(c,c,\sigma)=(sc,tc,\sigma)=(atc,tc,\sigma)= (at,t,\sigma)=(s,t,\sigma)\, ,
\end{array}\]
and $(s,t,\sigma)\leq (u,v,\sigma)$.
\end{proof}

The description of the Green's relations given by the previous result allows us to immediately conclude the following:

\begin{cor}\label{greenrestriction}
Let $S\in \mI$ and let $\varphi$ be the embedding of $S$ into $T(S)$ considered earlier. The restriction to $S\varphi$ of a Green's relation on $T(S)$ gives us precisely the corresponding Green's relation on $S\varphi$; in other words, if $\mathcal{K}\in\{ \mathcal{H},\mathcal{R},\mathcal{L},\mathcal{D},\mathcal{J}\}$ and $a\in S\varphi$, then
\[K_a^{T(S)}\cap S\varphi =K_a^{S\varphi}\, .\]
Further, $H_a^{T(S)}=H_a^{S\varphi}$ and each $\mathcal{J}$-class [$\mathcal{D}$-class] of $T(S)$ contains exactly one $\mathcal{J}$-class [$\mathcal{D}$-class] of $S\varphi$.
\end{cor}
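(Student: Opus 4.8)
The plan is to read off every assertion directly from Proposition~\ref{greenTS}, using that $\varphi$ is an isomorphism of $S$ onto the subsemigroup $S\varphi$. Thus, for $\mathcal{K}\in\{\mathcal{H},\mathcal{R},\mathcal{L},\mathcal{D},\mathcal{J}\}$, the relation $\mathcal{K}$ computed inside $S\varphi$ corresponds to $\mathcal{K}$ on $S$; concretely $(s,s,\sigma)\,\mathcal{K}^{S\varphi}\,(u,u,\sigma)$ \iff\ $s\,\mathcal{K}^S\,u$. The whole proof then amounts to specialising Proposition~\ref{greenTS} to pairs of diagonal triples $(s,s,\sigma),(u,u,\sigma)\in S\varphi$ and checking that the auxiliary conditions appearing there become vacuous on $S\varphi$.

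For the restriction identities $K_a^{T(S)}\cap S\varphi=K_a^{S\varphi}$, I would fix $a=(s,s,\sigma)$ and take $(u,u,\sigma)\in S\varphi$. For $\mathcal{L},\mathcal{D},\mathcal{J}$ the corresponding parts (ii), (iv), (v) give at once that $(s,s,\sigma)\,\mathcal{K}^{T(S)}\,(u,u,\sigma)$ \iff\ $s\,\mathcal{K}^S\,u$ (the clause $\alpha=\beta$ in (ii) being automatic, as both last coordinates equal $\sigma$). For $\mathcal{R}$, part (i) asks additionally that $(s,u)=a(s,u)$ for some $a\in S^1$, which holds trivially with $a=1$; hence $\mathcal{R}^{T(S)}$ also restricts to $\mathcal{R}^S$ on $S\varphi$, and $\mathcal{H}=\mathcal{R}\cap\mathcal{L}$ takes care of the remaining case. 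This yields all five restriction identities.

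The one point needing a genuine (if short) argument is the sharper equality $H_a^{T(S)}=H_a^{S\varphi}$, for which I must show the whole $\mathcal{H}$-class of $a=(s,s,\sigma)$ already lies in $S\varphi$. So I would take an arbitrary $(u,v,\beta)\,\mathcal{H}\,(s,s,\sigma)$ and invoke Proposition~\ref{greenTS}.(iii): this forces $\beta=\sigma$, $s\,\mathcal{H}^S\,v$, and $(s,u)=a'(s,v)$ for some $a'\in S^1$, that is $s=a's$ and $u=a'v$. Since $s\,\mathcal{R}^S\,v$ gives $v=sx$ for some $x\in S^1$, I obtain $u=a'v=a'sx=sx=v$, so $(u,v,\beta)=(v,v,\sigma)\in S\varphi$. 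Combined with the restriction identity for $\mathcal{H}$ this gives $H_a^{T(S)}=H_a^{T(S)}\cap S\varphi=H_a^{S\varphi}$. I expect this step --- turning the left-identity relation $s=a's$ together with $s\,\mathcal{R}^S\,v$ into $a'v=v$ --- to be the only real obstacle, and it is a pure Green's-relations computation inside $S$.

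Finally, the counting statement follows by combining the restriction identity with a surjectivity observation. The identity $J_a^{T(S)}\cap S\varphi=J_a^{S\varphi}$ (respectively $D_a^{T(S)}\cap S\varphi=D_a^{S\varphi}$) already says that any $\mathcal{J}$-class (respectively $\mathcal{D}$-class) of $T(S)$ which meets $S\varphi$ does so in exactly one $\mathcal{J}$-class (respectively $\mathcal{D}$-class) of $S\varphi$. It therefore only remains to see that every such class of $T(S)$ does meet $S\varphi$: given $(s,t,\alpha)\in T(S)$, parts (v) and (iv) of Proposition~\ref{greenTS} show it is $\mathcal{J}$- and $\mathcal{D}$-related to the diagonal triple $(t,t,\sigma)=t\varphi\in S\varphi$, since $t\,\mathcal{J}^S\,t$ and $t\,\mathcal{D}^S\,t$. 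Hence each $\mathcal{J}$-class and each $\mathcal{D}$-class of $T(S)$ contains exactly one $\mathcal{J}$-class, respectively $\mathcal{D}$-class, of $S\varphi$.
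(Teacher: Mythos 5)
Your proof is correct and follows exactly the route the paper intends: the paper states Corollary~\ref{greenrestriction} without proof, as an immediate consequence of Proposition~\ref{greenTS}, and your argument is precisely that specialization to diagonal triples $(s,s,\sigma)\in S\varphi$, together with the observation that any $(s,t,\alpha)\in T(S)$ is $\mathcal{D}$- and $\mathcal{J}$-related to $(t,t,\sigma)=t\varphi$. The one step needing genuine computation --- showing $H_a^{T(S)}\subseteq S\varphi$ by deducing $u=a'v=a'sx=sx=v$ from $s=a's$ and $v=sx$ --- is sound and is exactly the detail the paper leaves implicit.
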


The local submonoids of a semigroup $S$ are the subsemigroups $eSe$ of $S$ with $e\in E(S)$. In the next result we show that each maximal subgroup and each local submonoid of $S\in\mI$ is respectively isomorphic to some maximal subgroup and some local submonoid of $T(S)$, and vice-versa.

\begin{prop}
Let $S\in \mI$ and $e=(s,t,\alpha)\in E(T(S))$.
\begin{itemize}
\item[$(i)$] If $\alpha=\sigma$, then $\psi: H_e \longrightarrow H_t,\;(s_1,t_1,\sigma) \longmapsto t_1$ is an isomorphism from the (maximal) subgroup $H_e$ of $T(S)$ onto the (maximal) subgroup $H_t$ of $S$; if $\alpha=\tau$, then $\psi: H_e \longrightarrow H_s,\;(s_1,t_1,\tau) \longmapsto s_1$ is an isomorphism from the (maximal) subgroup $H_e$ of $T(S)$ onto the (maximal) subgroup $H_s$ of $S$.
\item[$(ii)$] If $\alpha=\sigma$, then $\chi: e\,T(S)\,e \longrightarrow tSt,\;(s_1,t_1,\sigma) \longmapsto t_1$ is an isomorphism from the local submonoid $e\,T(S)\,e$ of $T(S)$ onto the local submonoid $tSt$ of $S$; if $\alpha=\tau$, then $\chi: e\,T(S)\,e \longrightarrow sSs,\;(s_1,t_1,\tau) \longmapsto s_1$ is an isomorphism from the local submonoid $e\,T(S)\,e$ of $T(S)$ onto the local submonoid $sSs$ of $S$.
\end{itemize}
\end{prop}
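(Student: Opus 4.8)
The plan is to treat each item by exhibiting an explicit two-sided inverse for the given map and checking that map is a homomorphism; bijectivity then comes for free. Throughout, I would first note that $e=(s,t,\alpha)\in E(T(S))\subseteq E(T*R_2)$, so by Lemma \ref{etr2} the case $\alpha=\sigma$ forces $t\in E(S)$ and $st=s$, while the case $\alpha=\tau$ forces $s\in E(S)$, $ts=t$, and (since $s\in S^1t$) $s\,\mathcal{L}\,t$. In particular $t$ (resp. $s$) is an idempotent of $S$, so $H_t$ (resp. $H_s$) really is a maximal subgroup of $S$, and $H_e$ is a maximal subgroup of $T(S)$ because $e$ is idempotent. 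The two values of $\alpha$ give dual arguments, so I would write out $\alpha=\sigma$ in full and only indicate the changes for $\alpha=\tau$.

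For $(i)$ with $\alpha=\sigma$, the first step is to record the ``identity equation''. Since $e$ is the identity of the group $H_e$, every $(s_1,t_1,\sigma)\in H_e$ satisfies $e(s_1,t_1,\sigma)=(st_1,tt_1,\sigma)=(s_1,t_1,\sigma)$, so $s_1=st_1$. This already shows $\psi$ is injective (the first coordinate is determined by the second) and suggests the inverse $t_1\mapsto(st_1,t_1,\sigma)$. To see this lands in $H_e$ I would invoke Proposition \ref{greenTS}$(iii)$: for $t_1\in H_t$ we have $t_1\,\mathcal{H}\,t$ and, taking $a=s$, the relation $(st_1,s)=s(t_1,t)=(st_1,st)$ holds because $st=s$; hence $(st_1,t_1,\sigma)\in H_e$, and it clearly lies in $T(S)$. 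The two maps are mutually inverse by the identity equation, and $\psi$ is a homomorphism since $(s_1,t_1,\sigma)(s_2,t_2,\sigma)=(s_1t_2,t_1t_2,\sigma)\mapsto t_1t_2$. For $\alpha=\tau$ the identity equation becomes $t_1=ts_1$, the projection is onto the first coordinate, and the candidate inverse is $s_1\mapsto(s_1,ts_1,\tau)$.

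The one genuinely non-routine point is verifying, in the $\tau$ case of $(i)$, that this inverse really maps into $H_e$; by Proposition \ref{greenTS}$(iii)$ this amounts to checking $ts_1\,\mathcal{H}\,t$ for $s_1\in H_s$. The $\mathcal{R}$-part is easy: using the group inverse $s_1^{-1}\in H_s$ (so $s_1s_1^{-1}=s$) together with $ts=t$ gives $(ts_1)s_1^{-1}=t$, whence $t\,\mathcal{R}\,ts_1$. The $\mathcal{L}$-part is where I expect the main obstacle, and I would resolve it using that $\mathcal{L}$ is a right congruence: from $s\,\mathcal{L}\,t$ we get $ss_1\,\mathcal{L}\,ts_1$, and since $ss_1=s_1$ (as $s$ is the identity of $H_s$) this reads $s_1\,\mathcal{L}\,ts_1$; combined with $s_1\,\mathcal{L}\,s\,\mathcal{L}\,t$ it yields $ts_1\,\mathcal{L}\,t$. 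Finally the condition $(s_1,s)=a(ts_1,t)$ holds with $a=p$, where $p\in S^1$ is chosen so that $s=pt$ (possible since $s\in S^1t$), because then $p(ts_1)=ss_1=s_1$; this same computation gives $s_1\in S^1(ts_1)$, so the triple indeed lies in $T(S)$.

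For $(ii)$ I would first observe that, $e$ being idempotent, $e\,T(S)\,e=\{y\in T(S):ey=ye=y\}$. Writing this out in coordinates for $\alpha=\sigma$ forces the third coordinate to be $\sigma$ and yields exactly $s_1=st_1$ together with $tt_1=t_1=t_1t$, i.e. $t_1\in tSt$; dually for $\alpha=\tau$ one gets $t_1=ts_1$ and $s_1\in sSs$. Thus $\chi$ is well defined onto $tSt$ (resp. $sSs$), with inverse $t_1\mapsto(st_1,t_1,\sigma)$ (resp. $s_1\mapsto(s_1,ts_1,\tau)$); here the verification that the inverse lands in $e\,T(S)\,e$ is a direct check of the four fixed-point equations and needs no Green's-relation argument, so this part is entirely routine. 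That $\chi$ is a homomorphism is immediate from the product formula, exactly as in $(i)$, and $\chi(e)$ is the identity of the local submonoid, so $\chi$ is an isomorphism of monoids.
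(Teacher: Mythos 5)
Your argument follows essentially the same route as the paper's: both rest on the explicit description of $H_e$ obtained from Proposition \ref{greenTS}.$(iii)$ and on an explicit coordinate description of $e\,T(S)\,e$, and both then check that the coordinate projection is a bijective homomorphism. Your treatment of $(ii)$, of the $\sigma$ case of $(i)$, and of the map $s_1\mapsto(s_1,ts_1,\tau)$ from $H_s$ into $H_e$ in the $\tau$ case is correct, and indeed more detailed than the paper's: where the paper disposes of the $\tau$ case by remarking that, since $s\,\mathcal{L}\,t$, the defining conditions can be rewritten symmetrically in terms of the first coordinates, you verify $ts_1\,\mathcal{H}\,t$ directly via the right congruence property of $\mathcal{L}$ and the group inverse $s_1^{-1}$; both verifications are sound.

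There is, however, one step you never justify, and it sits exactly where the $\tau$ case differs from the $\sigma$ case: the well-definedness of $\psi$ itself, i.e.\ that the first coordinate $s_1$ of an \emph{arbitrary} element $(s_1,t_1,\tau)\in H_e$ lies in $H_s$. Proposition \ref{greenTS}.$(iii)$ controls the second coordinates (it gives $t_1\,\mathcal{H}\,t$), so in the $\sigma$ case well-definedness is immediate; but in the $\tau$ case you project onto the coordinate about which that proposition says nothing directly. Your identity equations yield only $t_1=ts_1$, $s_1=ss_1$ and $s_1s=s_1$, hence $s_1\in sSs$, which is weaker than $s_1\in H_s$; and your ``mutually inverse'' conclusion silently assumes $\psi(H_e)\subseteq H_s$, without which the composite of $\psi$ with your candidate inverse is not even defined on all of $H_e$ (what your argument actually establishes is the reverse containment $\psi(H_e)\supseteq H_s$). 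The gap is genuine but closes in a line: either argue as the paper does, using $s\,\mathcal{L}\,t$ to show that the conditions ``$t_1\,\mathcal{H}\,t$ and $(s,s_1)=a(t,t_1)$'' are equivalent to ``$s_1\,\mathcal{H}\,s$ and $(t,t_1)=b(s,s_1)$ for some $b\in S^1$''; or observe that, since all elements of $H_e$ carry third coordinate $\tau$ and $(a,b,\tau)(c,d,\tau)=(ac,bc,\tau)$, the first-coordinate projection is a homomorphism on $H_e$, so $H_e\psi$ is a subgroup of $S$ with identity element $e\psi=s$, and every subgroup with identity $s$ is contained in the maximal subgroup $H_s$. (A cosmetic point in $(ii)$: ``lands in $e\,T(S)\,e$'' also requires $(s_1,ts_1,\tau)\in T(S)$, i.e.\ $s_1\in S^1ts_1$; this is the same one-line computation with $s=pt$ that you already carried out in $(i)$, so it is harmless there.)
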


\begin{proof}
$(i)$. If $\alpha=\sigma$, then $t\in E(S)$ and by Proposition \ref{greenTS}.$(iii)$, 
\[H_e=\{(s_1,t_1,\sigma)\, :\; t_1\,\mathcal{H}\, t \;\mbox{ and }\; (s,s_1)=a(t,t_1) \,\mbox{ for some }\, a\in S^1\}\, .\]
It is now evident that $\psi$ is a well-defined surjective homomorphism. If $at=bt=s$ for some $a,b\in S^1$, then $at_1=bt_1$ and so $\psi$ is also injective. 

If $\alpha=\tau$, then $s\in E(S)$ and $t\mathcal{L} s$. Let $g=(s_1,t_1,\beta)\in T(S)$. Then $g\mathcal{H} e$ \iff\ $\beta=\tau$, $t_1\mathcal{H} t$ and $(s,s_1)=a(t,t_1)$ for some $a\in S$. Since $s\mathcal{L} t$, the conditions $t_1\mathcal{H} t$ and $(s,s_1)=a(t,t_1)$ are equivalent to the conditions $s_1\mathcal{H} s$ and $(t,t_1)=b(s,s_1)$ for some $b\in S^1$. Thus
\[H_e=\{(s_1,t_1,\sigma)\, :\; s_1\,\mathcal{H}\, s \;\mbox{ and }\; (t,t_1)=b(s,s_1) \,\mbox{ for some }\, b\in S^1\}\, .\]
The proof now follows similarly to the case $\alpha=\sigma$.

$(ii)$. If $\alpha=\sigma$, then $t\in E(S)$ and \[eT(S)e=\{(sut,tut,\sigma),\,(sus,tus,\sigma)\,:\; u\in S\}=\{(av,v,\sigma)\, :\; v\in tSt\}\]
for $a\in S^1$ such that $s=at$. If $\alpha=\tau$, then $s\in E(S)$ and $s\mathcal{L} t$. Let $b\in S^1$ such that $t=bs$. Then
\[eT(S)e=\{(sut,tut,\tau),\,(sus,tus,\tau)\,:\; u\in S\}=\{(v,bv,\tau)\, :\; v\in sSs\}\, .\]
It is now trivial to check that for either $\alpha=\sigma$ or $\alpha=\tau$, the mapping $\chi$ is an isomorphism.
\end{proof}

Since the subgroups and the local submonoids of $S\in\mI$ and $T(S)$ are isomorphic by the previous result, we now have:

\begin{cor}\label{sgpandlsm}
Let $S\in\mI$ and let $\mathcal{P}$ be any group property and $\mathcal{Q}$ be any semigroup property.
\begin{itemize}
\item[$(i)$] The (maximal) subgroups of $S$ have property $\mathcal{P}$ \iff\ the (ma\-xi\-mal) subgroups of $T(S)$ have property $\mathcal{P}$.
\item[$(ii)$] The local submonoids of $S$ have property $\mathcal{Q}$ \iff\ the local submonoids of $T(S)$ have property $\mathcal{Q}$.
\end{itemize}
\end{cor}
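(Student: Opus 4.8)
The plan is to deduce this directly from the preceding proposition, whose content is exactly a two-way isomorphic matching between the maximal subgroups (resp.\ local submonoids) of $S$ and those of $T(S)$. Since $\mathcal{P}$ is a group property and $\mathcal{Q}$ a semigroup property, both are by definition invariant under isomorphism; thus it suffices to check that the maximal subgroups (resp.\ local submonoids) of the two semigroups coincide up to isomorphism, and to do so in both directions.

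For the forward implication in each of $(i)$ and $(ii)$, I would take an arbitrary idempotent $e=(s,t,\alpha)\in E(T(S))$ and invoke the previous proposition. It tells us that $H_e$ is isomorphic to $H_t$ or to $H_s$ (according as $\alpha=\sigma$ or $\alpha=\tau$), each a maximal subgroup of $S$; likewise $eT(S)e$ is isomorphic to $tSt$ or $sSs$, each a local submonoid of $S$. Under the hypothesis that all maximal subgroups (resp.\ local submonoids) of $S$ enjoy $\mathcal{P}$ (resp.\ $\mathcal{Q}$), invariance under isomorphism forces $H_e$ (resp.\ $eT(S)e$) to enjoy the same property. As $e$ is arbitrary, this yields that every maximal subgroup (resp.\ local submonoid) of $T(S)$ has the property.

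For the converse implication I must realize every maximal subgroup and every local submonoid of $S$ inside $T(S)$. Given $f\in E(S)$, I would first check that $(f,f,\sigma)\in E(T(S))=E$: by Lemma \ref{etr2} this holds since $f\in E(S)$ and $ff=f$, and membership in $E$ is immediate from $f\in S^1f$. Applying the previous proposition to $e=(f,f,\sigma)$ (the case $\alpha=\sigma$ with $s=t=f$) yields $H_{(f,f,\sigma)}\cong H_f$ and $(f,f,\sigma)\,T(S)\,(f,f,\sigma)\cong fSf$. Hence if all maximal subgroups (resp.\ local submonoids) of $T(S)$ satisfy $\mathcal{P}$ (resp.\ $\mathcal{Q}$), then so do $H_f$ and $fSf$ by isomorphism-invariance; since $f\in E(S)$ is arbitrary, the converse implications follow. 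One may also observe that $(f,f,\sigma)=f\varphi$, so this realization is simply the embedding $\varphi$ together with Corollary \ref{greenrestriction}, which already identifies $H_{f\varphi}^{T(S)}$ with $H_f^{S\varphi}$.

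The argument is essentially book-keeping: the substantive work is already carried by the preceding proposition, and the present step only transports properties along the isomorphisms it provides. The one point needing genuine verification, and the only place I anticipate any friction, is the ``vice-versa'' realization in the previous paragraph, namely confirming that each $f\in E(S)$ produces a bona fide idempotent $(f,f,\sigma)$ of $T(S)$ whose $\mathcal{H}$-class and local submonoid reproduce $H_f$ and $fSf$; this is dispatched cleanly by Lemma \ref{etr2}.
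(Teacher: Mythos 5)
Your proposal is correct and matches the paper's intended argument exactly: the paper states this corollary as an immediate consequence of the preceding proposition, transporting $\mathcal{P}$ and $\mathcal{Q}$ along the isomorphisms $H_e\cong H_t$ (or $H_s$) and $eT(S)e\cong tSt$ (or $sSs$). Your explicit verification of the converse via $e=(f,f,\sigma)=f\varphi$ for $f\in E(S)$ is precisely the detail the paper leaves implicit, and it checks out by Lemma \ref{etr2}.
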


An e-variety of regular semigroups \cite{hall89,kadszendrei} is a class of these semigroups closed for homomorphic images, regular subsemigroups and direct products. We know that the local submonoids of a regular semigroup are regular too. Thus, if $\bf V$ is an e-variety of regular semigroup, we can define the class $\bf LV$ of all regular semigroups whose local submonoids belong to $\bf V$. It is well known that $\bf LV$ is again an e-variety of regular semigroups, and we call locally $\bf V$ the semigroups from $\bf LV$. Corollary \ref{sgpandlsm}.$(ii)$ now implies the next result.

\begin{cor}\label{evar}
Let $\bf V$ be an e-variety of regular semigroups. Then $S\in \bf LV$ \iff\ $T(S)\in\bf LV$.
\end{cor}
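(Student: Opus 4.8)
The plan is to obtain the corollary as an immediate consequence of the two structural facts already established: the regularity equivalence of Proposition \ref{finperreg}, and the local-submonoid correspondence of Corollary \ref{sgpandlsm}.$(ii)$. Recall that, by definition, a regular semigroup lies in $\mathbf{LV}$ precisely when each of its local submonoids belongs to $\mathbf{V}$, and that the local submonoids of a regular semigroup are themselves regular, so that ``belonging to $\mathbf{V}$'' is a meaningful requirement to impose on them.

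First I would fix the semigroup property $\mathcal{Q}$ to be ``membership in $\mathbf{V}$''. Since $\mathbf{V}$, being an e-variety, is closed under isomorphic images, this $\mathcal{Q}$ is invariant under isomorphism and hence is a legitimate semigroup property in the sense of Corollary \ref{sgpandlsm}. Applying part $(ii)$ of that corollary with this choice of $\mathcal{Q}$ yields directly that the local submonoids of $S$ all lie in $\mathbf{V}$ \iff\ the local submonoids of $T(S)$ all lie in $\mathbf{V}$.

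For the forward implication, I would suppose $S\in\mathbf{LV}$. Then $S$ is regular, so $T(S)$ is regular by Proposition \ref{finperreg}; and all local submonoids of $S$ lie in $\mathbf{V}$, so by the equivalence above all local submonoids of $T(S)$ lie in $\mathbf{V}$ as well. Hence $T(S)\in\mathbf{LV}$. The reverse implication is symmetric: if $T(S)\in\mathbf{LV}$ then $T(S)$ is regular, forcing $S$ regular again by Proposition \ref{finperreg}, and the same equivalence transports the local-submonoid condition back to $S$, giving $S\in\mathbf{LV}$.

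The argument is essentially a bookkeeping one, and I expect no genuine computational obstacle. The single point that needs care --- and the only place where the hypotheses are really used --- is the interplay between the two ambient conditions defining $\mathbf{LV}$: one must ensure that $S$ (equivalently $T(S)$) is regular so that the relation ``$\in\mathbf{LV}$'' applies at all, which is exactly what Proposition \ref{finperreg} provides, and that each local submonoid is regular so that testing membership in the e-variety $\mathbf{V}$ is sensible. Once these are in place, Corollary \ref{sgpandlsm}.$(ii)$ does all the remaining work.
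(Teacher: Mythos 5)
Your proposal is correct and follows exactly the paper's route: the paper derives the corollary directly from Corollary \ref{sgpandlsm}.$(ii)$ applied with $\mathcal{Q}$ taken to be membership in $\mathbf{V}$, with regularity of $T(S)$ (Proposition \ref{finperreg}) and regularity of local submonoids handled implicitly in the surrounding discussion. You merely make those bookkeeping points explicit, which is a faithful elaboration rather than a different argument.
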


In particular, we can conclude that $S$ is a completely simple semigroup (locally a group) \iff\ $T(S)$ is a completely simple semigroup; $S$ is a combinatorial strict semigroup (locally a semilattice) \iff\ $T(S)$ is a combinatorial strict semigroup; $S$ is a strict semigroup (locally a semilattice of groups) \iff\ $T(S)$ is a strict semigroup; and $S$ is a locally inverse semigroup (locally an inverse semigroup) \iff\ $T(S)$ is a locally inverse semigroup.

The Corollary \ref{evar} cannot be obtained directly from the previously known constructions of an embedding of a semigroup $S$ into a semiband $B(S)$ since those constructions usually assume that $S$ is a monoid. For example, the usual conclusions were that $B(S)$ is completely simple \iff\ $S$ is a group, or that $B(S)$ is locally inverse \iff\ $S$ is inverse. However, for some of those constructions, we can obtain Corollary \ref{evar} if we consider instead a subsemiband $B^*(S)$ of $B(S^1)$ and embed the semigroup $S$ into $B^*(S)$.

A semigroup $S$ is simple [bisimple] if it has only one $\mathcal{J}$-class [$\mathcal{D}$-class]. A semigroup $S$ with element $0$ is $0$-simple [$0$-bisimple] if $\{0\}$ and $S\setminus\{0\}$ are the only $\mathcal{J}$-classes [$\mathcal{D}$-classes] of $S$ and $S$ is not a null semigroup, that is, $S^2\neq\{0\}$. A non-zero idempotent $e\in E(S)$ is called primitive if for all $f\in E(S)$,
\[ef=fe=f\neq 0 \;\Longrightarrow\; e=f\, .\]
It is well known that a $0$-simple [simple] semigroup has a primitive idempotent \iff\ all non-zero idempotents are primitive. A completely $0$-simple [completely simple] semigroup is a $0$-simple [simple] semigroup with a primitive idempotent. Note that we have used above that a completely simple semigroup is a regular semigroup whose local submonoids are groups. It is well known that the two definitions are equivalent. 

Let $S$ be a semigroup and $a\in S$. Let $J(a)$ be the principal ideal generated by $a$ and $J_a$ the $\mathcal{J}$-class of $a$. Then $I(a)=J(a)\setminus J_a$ is an ideal of $J(a)$ and $J(a)/I(a)\cong J_a\cup\{0\}$ is either a $0$-simple semigroup or a null semigroup. Note further that $J(a)/I(a)$ is $0$-simple \iff\ there exist $b,c\in J_a$ such that $bc\in J_a$. The semigroups $J(a)/I(a)$ are called the principal factors of $S$, and $S$ is called semisimple if all its principal factors are $0$-simple semigroups. Further, if all principal factors of $S$ are completely $0$-simple semigroups, then we say that $S$ is completely semisimple.

We need one more definition for the next result. A semigroup $S$ is [left, right] cryptic if the Green's $\mathcal{H}$-relation is a [left, right] congruence on $S$.

\begin{prop}\label{simple}
Let $\mathcal{P}$ be one of the following properties: simple, bisimple, completely simple, semisimple, completely semisimple and [left, right] cryptic. Then $S\in\mI$ has property $\mathcal{P}$ \iff\ $T(S)$ has property $\mathcal{P}$.
\end{prop}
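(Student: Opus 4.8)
The plan is to handle each of the six properties by reducing it, via Proposition~\ref{greenTS} and the already-established isomorphisms of subgroups and local submonoids, to the corresponding statement about $S$. The key observation driving everything is that the $\mathcal{J}$-order and $\mathcal{D}$-order on $T(S)$ are completely controlled by the second coordinate: by Proposition~\ref{greenTS}.$(iv)$ and $(v)$, $(s,t,\alpha)\,\mathcal{D}\,(u,v,\beta)$ \iff\ $t\mathcal{D}v$ and $(s,t,\alpha)\,\mathcal{J}\,(u,v,\beta)$ \iff\ $t\mathcal{J}v$. Since the map $(s,t,\alpha)\mapsto t$ is surjective onto $S$ (given any $t$, the element $(t,t,\sigma)$ lies in $T(S)$), this second-coordinate projection induces a bijection between the $\mathcal{J}$-classes of $T(S)$ and those of $S$, and likewise for $\mathcal{D}$-classes, and this bijection respects the containment order on principal ideals.

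First I would dispatch the four properties that are purely about the $\mathcal{J}$/$\mathcal{D}$-structure. For \emph{simple} and \emph{bisimple}, the bijection above shows $T(S)$ has a single $\mathcal{J}$-class (resp.\ $\mathcal{D}$-class) \iff\ $S$ does. For \emph{semisimple}, I would use the characterization recalled just before the statement: a principal factor $J(a)/I(a)$ is $0$-simple \iff\ there exist $b,c\in J_a$ with $bc\in J_a$. Translating through the projection, I would show that a $\mathcal{J}$-class of $T(S)$ has this "product-closure within the class" property \iff\ the corresponding $\mathcal{J}$-class of $S$ does; here I would use that $(b_1,b_2,\beta)(c_1,c_2,\gamma)$ has second coordinate $b_2c_2$ or $b_2c_2$-type expressions, so products of elements over a $\mathcal{J}$-class of $T(S)$ project to products over the matching $\mathcal{J}$-class of $S$. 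For \emph{completely simple} the result is already available as a special case of Corollary~\ref{evar} (completely simple $=$ locally a group), and \emph{completely semisimple} I would reduce to combining semisimplicity with the fact, from Proposition~\ref{greenTS}.$(iii)$ together with the subgroup isomorphism, that the maximal subgroups of a $\mathcal{D}$-class of $T(S)$ are isomorphic to those of the matching $\mathcal{D}$-class of $S$, so primitivity of idempotents transfers.

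For the \emph{cryptic} cases I would argue that $\mathcal{H}$ is a (left/right) congruence on $T(S)$ \iff\ it is on $S$, using the $\mathcal{H}$-description in Proposition~\ref{greenTS}.$(iii)$. In one direction, given $\mathcal{H}$-related pairs in $T(S)$ I would multiply on one side by an arbitrary element and check, coordinate by coordinate using the two multiplication rules for $\sigma$ and $\tau$, that the products remain $\mathcal{H}$-related precisely when the second-coordinate $\mathcal{H}$-classes in $S$ behave compatibly; for the converse I would restrict to the copy $S\varphi$, where by Corollary~\ref{greenrestriction} the relation $\mathcal{H}$ agrees with that of $S$. The main obstacle I expect is the cryptic case: unlike the $\mathcal{J}$/$\mathcal{D}$ properties, being a (one-sided) congruence is a universally quantified closure condition, so I cannot simply read it off the projection. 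I must verify compatibility of $\mathcal{H}$ with multiplication by \emph{all} triples, and the asymmetry between the $\sigma$- and $\tau$-multiplication rules means the left-cryptic and right-cryptic sub-cases will need to be checked separately rather than by a single symmetric argument. The first-coordinate condition $(s,u)=a(t,v)$ appearing in the $\mathcal{H}$-description is where the bookkeeping is most delicate, and keeping that condition stable under multiplication is the step most likely to require care.
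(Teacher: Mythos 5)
Your plan coincides with the paper's proof for most of the properties: simple and bisimple are read off Proposition~\ref{greenTS}.$(iv)$ and $(v)$, completely simple is quoted from Corollary~\ref{evar}, and for the cryptic cases the paper argues exactly as you propose (one direction by restricting to $S\varphi$ via Corollary~\ref{greenrestriction}, the other by a case analysis on the third coordinate of the multiplier, with the condition $(s,u)=a(t,v)$ kept stable by the membership condition $p\in S^1q$ for triples $(p,q,\gamma)\in T(S)$). The genuine gap is in your treatment of \emph{completely semisimple}. Primitivity of an idempotent is a condition on the order of idempotents ($ef=fe=f\neq 0$ implies $e=f$), and it is simply not detectable from maximal subgroups: the bicyclic monoid and a rectangular band both have trivial maximal subgroups, yet the first is semisimple but not completely semisimple while the second is completely semisimple. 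So the inference ``the subgroups of matching $\mathcal{D}$-classes are isomorphic, hence primitivity transfers'' is invalid, and Proposition~\ref{greenTS}.$(iii)$ plus the subgroup isomorphisms cannot yield this case. What is actually needed --- and what the paper does --- is to control the idempotents of $T(S)$ sitting below a given $(e,e,\sigma)$: the equation $(s,t,\alpha)(e,e,\sigma)=(s,t,\alpha)$ forces $\alpha=\sigma$ (the product always ends in $\sigma$), and then $(e,e,\sigma)(s,t,\sigma)=(et,et,\sigma)=(s,t,\sigma)$ forces $s=t$; hence any idempotent below $(e,e,\sigma)$ has the form $(f,f,\sigma)$ with $f\in E(S)$ and $ef=fe=f$, which gives directly that $e$ is primitive in its principal factor of $S$ if and only if $(e,e,\sigma)$ is primitive in the corresponding principal factor of $T(S)$. (One could alternatively extract this from the order description in Proposition~\ref{greenTS}.$(vi)$, but not from subgroups.)

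There is also a smaller glossed step in the semisimple case. Your claim that products of elements of a $\mathcal{J}$-class of $T(S)$ project, in the second coordinate, to products of elements of the matching $\mathcal{J}$-class $J$ of $S$ is false as stated when the left factor carries $\tau$: there $(s_1,t_1,\tau)(s_2,t_2,\alpha_2)=(s_1s_2,t_1s_2,\alpha_2)$, whose second coordinate $t_1s_2$ involves the \emph{first} coordinate $s_2$ of the right factor, and a priori $s_2$ only satisfies $s_2\in S^1t_2$, so it need not lie in $J$. The paper closes precisely this point: if $t_1s_2\in J$, then $J(t_2)=J(t_1s_2)\subseteq J(s_2)\subseteq J(t_2)$, so in fact $s_2\in J$, and $t_1s_2$ is then a product of two elements of $J$ lying in $J$, as required by the $0$-simplicity criterion for principal factors. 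This second issue is easily repaired along these lines, but as written your reduction would fail for $\tau$-triples; the completely semisimple case, by contrast, needs the different (order-theoretic) idea above rather than a patch.
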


\begin{proof}
The bisimple and simple cases follow respectively from Proposition \ref{greenTS}.$(iv)$ and $(v)$, and the completely simple case follows from Corollary \ref{evar}. Since each $\mathcal{J}$-class of $T(S)$ contains exactly one $\mathcal{J}$-class of $S\varphi$ (Corollary \ref{greenrestriction}), if $S$ is semisimple (and so $S\varphi$ is semisimple), then $T(S)$ is semisimple.
Let $J$ be a $\mathcal{J}$-class of $S$ and let $J_1$ be the $\mathcal{J}$-class of $T(S)$ containing $J\varphi$. If $T(S)$ is semisimple, then there exist $(s_i,t_i,\alpha_i)\in J_1$ for $i=1,2$ such that $(s_1,t_1,\alpha_1)(s_2,t_2,\alpha_2)\in J_1$. Thus $t_1,t_2\in J$. If $\alpha_1=\sigma$, then $t_1t_2\in J$. If $\alpha_1=\tau$, then $t_1s_2\in J$; but since $s_2\in S^1t_2$, we must have also $s_2\in J$. We conclude that if $T(S)$ is semisimple, then $S$ is semisimple too. We have shown the semisimple case.

For the completely semisimple case, we already know that $S\in\mI$ is semisimple \iff\ $T(S)$ is semisimple. Let $e\in E(S)$. Observe that $(s,t,\alpha)(e,e,\sigma)= (s,t,\alpha)$ implies $\alpha=\sigma$, and that $(e,e,\sigma)(s,t,\alpha)=(s,t,\alpha)$ implies $s=t$. It is now trivial to check that $e$ is a primitive idempotent of a principal factor of $S$ \iff\ $(e,e,\sigma)$ is a primitive idempotent of the corresponding principal factor of $T(S)$. Hence, $S$ is completely semisimple \iff\ $T(S)$ is completely semisimple.

If $T(S)$ is [left, right] cryptic, then $S$ is [left, right] cryptic since \[\mathcal{H}^{T(S)}\cap S\varphi= \mathcal{H}^{S\varphi}\]
and $S$ is isomorphic to $S\varphi$. Consider now $(s,t,\alpha),(u,v,\beta),(p,q,\gamma)\in T(S)$ such that $(s,t,\alpha)\, \mathcal{H} \,(u,v,\beta)$. Then $\alpha=\beta$, $t\mathcal{H} v$ and $(s,u)=a(t,v)$. If $S$ is left cryptic, then 
\[(p,q,\gamma)(s,t,\alpha)\,\mathcal{H}\, (p,q,\gamma)(u,v,\beta)\]
since $(p,q,\gamma)(s,t,\alpha)=(p_1t,q_1t,\alpha)$ and $(p,q,\gamma)(u,v,\beta)=(p_1v,q_1v,\alpha)$ for $(p_1,q_1)=(p,q)$ (if $\gamma=\sigma$) or $(p_1,q_1)=(pa,qa)$ (if $\gamma=\tau$). Hence, $T(S)$ is left cryptic if $S$ is left cryptic. If $S$ is right cryptic, then $tr\mathcal{H} vr$ for any $r\in S$, and so 
\[(s,t,\alpha)(p,q,\gamma)\,\mathcal{H}\, (u,v,\beta)(p,q,\gamma)\]
since $(s,t,\alpha)(p,q,\gamma)=(sr,tr,\gamma)$ and $(u,v,\beta)(p,q,\gamma)=(ur,vr,\gamma)$ for $r=p$ (if $\alpha=\tau$) or $r=q$ (if $\alpha=\sigma$). Hence, $T(S)$ is right cryptic if $S$ is right cryptic. We have shown that $S\in\mI$ is [left, right] cryptic \iff\ $T(S)$ is [left, right] cryptic. 
\end{proof}

Let $S\in\mI$ be a semigroup with $0$. Then $\overline{0}=\{(0,0,\sigma),(0,0,\tau)\}$ is the kernel of $T(S)$, that is, the minimal ideal of $T(S)$. Let $T^*(S)=T(S)/\overline{0}$. Clearly $T^*(S)$ is a semiband with $0$ and the mapping $\varphi^*:S\longrightarrow T^*(S)$ defined by
\[s\varphi^*=\left\{\begin{array}{ll}
s\varphi &\quad\mbox{ if } s\neq 0 \\ [.2cm]
\overline{0} &\quad\mbox{ if } s=0
\end{array}\right.\]
embeds $S$ into $T^*(S)$.

\begin{prop}\label{0simple}
Let $S\in\mI$ be a semigroup with $0$ element. Then $S$ is 0-simple, 0-bisimple or completely 0-simple \iff\ $T^*(S)$ is respectively 0-simple, 0-bisimple or completely 0-simple.
\end{prop}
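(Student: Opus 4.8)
The plan is to reduce the whole statement to the description of the Green relations on $T(S)$ given in Proposition~\ref{greenTS} together with an analysis of the Rees quotient $T^*(S)=T(S)/\overline{0}$. The starting point is the observation that $\overline{0}=\{(0,0,\sigma),(0,0,\tau)\}$ is exactly the $\mathcal{J}$-class (equivalently, by Proposition~\ref{greenTS}.$(iv)$, the $\mathcal{D}$-class) of $T(S)$ corresponding to the $\mathcal{J}$-class $\{0\}$ of $S$: indeed $(s,t,\alpha)\,\mathcal{J}\,(0,0,\sigma)$ forces $t=0$ by Proposition~\ref{greenTS}.$(v)$, and $t=0$ forces $s=0$ since $s\in S^1t$. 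As $\overline{0}$ is the minimal ideal of $T(S)$, it is the bottom $\mathcal{J}$-class, and I would invoke the standard fact that for a Rees quotient by an ideal the Green relations on the complement of that ideal are unchanged. Hence the nonzero $\mathcal{J}$-classes and $\mathcal{D}$-classes of $T^*(S)$ are in bijection with those of $T(S)$ distinct from $\overline{0}$, which by Proposition~\ref{greenTS}.$(iv)$ and $(v)$ are in bijection with the $\mathcal{J}$-classes and $\mathcal{D}$-classes of $S$ distinct from $\{0\}$.

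For the $0$-simple and $0$-bisimple cases this bijection immediately yields that $T^*(S)$ has exactly two $\mathcal{J}$-classes (resp. $\mathcal{D}$-classes), namely $\{0\}$ and its complement, \iff\ $S$ does. It then remains only to match the non-nullity condition $S^2\neq\{0\}$. Here I would use the product formula directly: for $\alpha_1=\sigma$ the product $(s_1,t_1,\sigma)(s_2,t_2,\alpha_2)$ has second coordinate $t_1t_2$, and an element of $T(S)$ lies in $\overline{0}$ \iff\ its second coordinate is $0$ (again because $s\in S^1t$). Letting $t_1,t_2$ range over $S$ shows that $T^*(S)$ is null \iff\ $t_1t_2=0$ for all $t_1,t_2$, i.e. \iff\ $S$ is null; the converse is clear since if $S$ is null every product in $T(S)$ lands in $\overline{0}$. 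Combining, $S$ is $0$-simple (resp. $0$-bisimple) \iff\ $T^*(S)$ is.

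For the completely $0$-simple case I would build on the $0$-simple equivalence just obtained and reduce matters to primitive idempotents, using the well-known facts that a $0$-simple semigroup is completely $0$-simple \iff\ it has a primitive idempotent, and that in a $0$-simple semigroup one nonzero idempotent is primitive \iff\ all are. The correspondence of primitive idempotents is handled through the diagonal idempotents: for $e\in E(S)$ with $e\neq 0$, I would show that $e$ is primitive in $S$ \iff\ the image of $(e,e,\sigma)$ is primitive in $T^*(S)$, reusing the two identities already isolated in the completely semisimple part of Proposition~\ref{simple}, namely that $(s,t,\alpha)(e,e,\sigma)=(s,t,\alpha)$ forces $\alpha=\sigma$ and $(e,e,\sigma)(s,t,\alpha)=(s,t,\alpha)$ forces $s=t$. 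These let me pin any idempotent of $T^*(S)$ sandwiched by $(e,e,\sigma)$ to the diagonal form $(f,f,\sigma)$ with $ef=fe=f$ in $S$, after which primitivity transfers in both directions.

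The main obstacle I anticipate is the converse direction for complete $0$-simplicity: a primitive idempotent of $T^*(S)$ need not have the diagonal shape $(e,e,\sigma)$, so I cannot argue about an arbitrary one directly. I would circumvent this by the ``one implies all'' fact above: since $T^*(S)$ is $0$-simple and possesses a nonzero idempotent (every nonzero idempotent of $T(S)$ has, by Proposition's description of $E$, a nonzero idempotent of $S$ in the appropriate coordinate, so $E(S)$ contains a nonzero $e$), the diagonal idempotent $(e,e,\sigma)$ is itself primitive in $T^*(S)$, whence $e$ is primitive in $S$ and $S$ is completely $0$-simple. The remaining care is merely to confirm that all bracketed cases ($\sigma$ versus $\tau$, and nonzero versus zero coordinates) are covered, which the product formula makes routine.
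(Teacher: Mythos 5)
Your proposal is correct and takes essentially the same route as the paper: both arguments transfer the $\mathcal{J}$- and $\mathcal{D}$-class structure through Proposition \ref{greenTS}.$(iv)$ and $(v)$ (with $\overline{0}$ as the class corresponding to $\{0\}$), match the non-nullity conditions, and reduce the completely $0$-simple case to the equivalence ``$e$ is primitive in $S$ \iff\ $(e,e,\sigma)$ is primitive in $T^*(S)$'' via the two implications $(s,t,\alpha)(e,e,\sigma)=(s,t,\alpha)\Rightarrow\alpha=\sigma$ and $(e,e,\sigma)(s,t,\alpha)=(s,t,\alpha)\Rightarrow s=t$ already isolated in the proof of Proposition \ref{simple}. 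The only deviations are minor and sound: you check non-nullity of $T^*(S)$ directly from the product formula where the paper instead invokes semisimplicity of $T(S)$ for the forward direction, and you explicitly supply the ``one primitive idempotent implies all'' step together with the existence of a nonzero idempotent of $S$ --- details the paper declares trivial and leaves to the reader.
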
 

\begin{proof}
If $S$ is $0$-simple, then $\overline{0}$ and $T(S)\setminus\overline{0}$ are the only $\mathcal{J}$-classes of $T(S)$ by Proposition \ref{greenTS}.$(v)$, and so $T^*(S)$ is a principal factor of $T(S)$. By Proposition \ref{simple} the semiband $T(S)$ is semisimple ($S$ is semisimple), whence $T^*(S)$ is a $0$-simple semigroup. If $T^*(S)$ is $0$-simple, then $S$ has only two $\mathcal{J}$-classes, namely $\{0\}$ and $S\setminus\{0\}$, again by Proposition \ref{greenTS}.$(v)$. Note that if $S$ is a null semigroup, then $T^*(S)$ is a null semigroup too. Hence $S$ is $0$-simple. We have shown that $S$ is $0$-simple \iff\ $T^*(S)$ is $0$-simple.

A $0$-bisimple semigroup is a $0$-simple semigroup with only two $\mathcal{D}$-classes. Thus, the $0$-bisimple case follows from Proposition \ref{greenTS}.$(iv)$ and the $0$-simple case. As in the proof of the completely simple case of Proposition \ref{simple}, to show the completely $0$-simple case we need only to show that $e$ is a primitive idempotent of $S$ \iff\ $(e,e,\sigma)$ is a primitive idempotent of $T^*(S)$. This is trivial to check and therefore we leave the details to the reader.
\end{proof}

\section{Regular semigroups}\label{section4}

In this section we shall assume that $S$ is always a regular semigroup. Then $T(S)$ is also regular by Proposition \ref{finperreg}. Let $R$ be a regular subsemigroup of $T(S)$ containing $S\varphi$. Then
\[\mathcal{K}^{T(S)}\cap R=\mathcal{K}^R\]
for the Green relation $\mathcal{K}\in\{\mathcal{H}, \mathcal{L},\mathcal{R}\}$ since $R$ is a regular subsemigroup. In the next result we show that the same equality holds true for $\mathcal{K}=\mathcal{D}$ and for $\mathcal{K}= \mathcal{J}$. We shall prove also that $(s,t,\alpha)\leq^{T(S)}(u,v,\beta)$ \iff\ $(s,t,\alpha)\leq^{R}(u,v,\beta)$ for any $(s,t,\alpha),(u,v,\beta)\in R$.

\begin{prop}\label{regsubsem}
Let $S$ be a regular semigroup and let $R$ be a regular subsemigroup of $T(S)$ containing $S\varphi$. Then 
\[\mathcal{K}^{T(S)}\cap R=\mathcal{K}^R\quad\mbox{ and }\quad \leq^{T(S)}\cap R=\leq^{R}\]
for $\mathcal{K}\in\{\mathcal{H}, \mathcal{L}, \mathcal{R},\mathcal{D},\mathcal{J}\}$.
\end{prop}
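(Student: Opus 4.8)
The plan is to reduce everything to the three nontrivial relations $\mathcal{D}$, $\mathcal{J}$ and $\le$, since the text already records $\mathcal{K}^{T(S)}\cap R=\mathcal{K}^R$ for $\mathcal{K}\in\{\mathcal{H},\mathcal{L},\mathcal{R}\}$, and the forward inclusions $\mathcal{K}^R\subseteq\mathcal{K}^{T(S)}\cap R$ and $\le^R\subseteq\le^{T(S)}\cap R$ are immediate: any witnesses chosen inside $R$ (connecting elements for $\mathcal{D}$, ideal factors for $\mathcal{J}$, Mitsch multipliers for $\le$) are still witnesses inside $T(S)$. Throughout I will freely use that, since $R$ is regular, $\mathcal{R}^{T(S)}\cap R=\mathcal{R}^R$ and $\mathcal{L}^{T(S)}\cap R=\mathcal{L}^R$, so any $\mathcal{R}^{T(S)}$- or $\mathcal{L}^{T(S)}$-link between two elements of $R$ is automatically an $\mathcal{R}^R$- or $\mathcal{L}^R$-link.

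The heart of the $\mathcal{D}$ case is the single claim that every $a=(s,t,\alpha)\in R$ satisfies $a\,\mathcal{D}^R\,t\varphi$, where $t\varphi=(t,t,\sigma)\in S\varphi\subseteq R$. If $\alpha=\sigma$ this is trivial, since $(s,t,\sigma)\,\mathcal{L}^{T(S)}\,(t,t,\sigma)$ and $\mathcal{L}$ restricts. If $\alpha=\tau$, the key trick is to toggle the third coordinate while staying inside $R$: choosing $e\in E(S)$ with $e\,\mathcal{L}^S\,t$ (which exists as $S$ is regular, so $te=t$ and hence $se=s$), one computes $a\cdot e\varphi=(s,t,\tau)(e,e,\sigma)=(s,t,\sigma)$, an element of $R$ because $e\varphi\in S\varphi$. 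Then $a\,\mathcal{R}^{T(S)}\,(s,t,\sigma)\,\mathcal{L}^{T(S)}\,t\varphi$ by Lemma \ref{pre-green}.$(i)$, and restriction of $\mathcal{R},\mathcal{L}$ gives $a\,\mathcal{D}^R\,t\varphi$. Granting the claim, if $a=(s,t,\alpha),b=(u,v,\beta)\in R$ with $a\,\mathcal{D}^{T(S)}\,b$, then $t\,\mathcal{D}^S\,v$ by Proposition \ref{greenTS}.$(iv)$; hence $t\varphi\,\mathcal{D}^{S\varphi}\,v\varphi$, and since $S\varphi\subseteq R$ also $t\varphi\,\mathcal{D}^R\,v\varphi$, so the chain $a\,\mathcal{D}^R\,t\varphi\,\mathcal{D}^R\,v\varphi\,\mathcal{D}^R\,b$ yields $a\,\mathcal{D}^R\,b$.

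For $\mathcal{J}$ I would bootstrap off the $\mathcal{D}$ case. Given $a,b\in R$ with $a\,\mathcal{J}^{T(S)}\,b$, Proposition \ref{greenTS}.$(v)$ gives $t\,\mathcal{J}^S\,v$, say $t=xvy$ and $v=x'ty'$ with $x,y,x',y'\in S^1$. A direct computation in $S\varphi$ shows $(x\varphi)(v\varphi)(y\varphi)=(xvy)\varphi=t\varphi$ (omitting a factor when the corresponding element is $1$), so $t\varphi\in R^1(v\varphi)R^1$, and symmetrically $v\varphi\in R^1(t\varphi)R^1$, whence $t\varphi\,\mathcal{J}^R\,v\varphi$. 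Combining with $a\,\mathcal{D}^R\,t\varphi$ and $b\,\mathcal{D}^R\,v\varphi$ from the claim, and using $\mathcal{D}^R\subseteq\mathcal{J}^R$, gives $a\,\mathcal{J}^R\,b$.

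Finally, for the natural partial order the argument is essentially general and uses regularity of both $R$ and $T(S)$. Let $a,b\in R$ with $a\le^{T(S)}b$, and fix inverses $a^*,b^*\in R$ (available since $R$ is regular). Put $p:=ab^*$ and $q:=a^*a$, both in $R$. Since $a\le^{T(S)}b$ forces $a\le_{\mathcal{L}}b$, one has $pb=ab^*b=a$, while $aq=aa^*a=a$ is automatic. The remaining identity $bq=ba^*a=a$ is where I expect the real work to lie: using the Hartwig--Nambooripad description of $\le$ in the regular semigroup $T(S)$, pick an idempotent $f\,\mathcal{L}^{T(S)}\,a$ with $bf=a$; since $a^*a$ is also an idempotent that is $\mathcal{L}^{T(S)}$-related to $a$, we get $f(a^*a)=a^*a$, hence $ba^*a=b\,f(a^*a)=(bf)(a^*a)=a\cdot a^*a=a$. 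Then $a=pb=bq=aq$ realizes $a\le^R b$ in Mitsch's sense. The main obstacle throughout is exactly this passage from witnesses living in $T(S)$ to witnesses living in $R$: for $\mathcal{D}$ it is resolved by the coordinate-toggling identity $a\cdot e\varphi=(s,t,\sigma)$, and for $\le$ by absorbing the external idempotent $f$ into the internal idempotent $a^*a$ through their common $\mathcal{L}^{T(S)}$-class.
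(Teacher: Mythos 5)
Your reductions and the $\mathcal{D}$ and $\mathcal{J}$ cases are correct, and they follow essentially the paper's route: the paper also connects $(s,t,\alpha)$ to diagonal elements $(t,t,\cdot)$ by $\mathcal{L}$-links inside $R$ and then transports $\mathcal{D}^{S\varphi}$ resp.\ $\mathcal{J}^{S\varphi}$ into $R$ via $S\varphi\subseteq R$. (Your coordinate-toggling identity $a\cdot e\varphi=(s,t,\sigma)$ in fact makes explicit a point the paper glosses over, namely why the needed intermediate elements lie in $R$ when $\alpha=\tau$.)

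The $\le$ case, however, has a genuine error, and it is not just a fixable slip in justification. First, the absorption identity is backwards: idempotents in a common $\mathcal{L}$-class form a \emph{left}-zero band ($e\mathcal{L}f$, $e=xf$ gives $ef=xff=e$), so from $f\,\mathcal{L}\,a^*a$ you get $f(a^*a)=f$ and $(a^*a)f=a^*a$ --- not $f(a^*a)=a^*a$ as you claim. Second, and fatally, the conclusion this was meant to deliver, $ba^*a=a$ for an \emph{arbitrary} inverse $a^*\in R$ of $a$, is simply false, even inside $T(S)$ with witnesses from $S\varphi$. Take $S=\mathcal{T}_3$ (maps composed left to right, written as image lists), $t=[2,1,1]$ and $v=[2,1,3]$. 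With the idempotents $e_0=[1,2,2]$, $f_0=[1,2,1]$ one checks $t=e_0v=vf_0=tf_0$, so $t\le v$ in $S$ and hence $a:=t\varphi\le^{T(S)}v\varphi=:b$ by Proposition \ref{greenTS}.$(vi)$. Now $x=[3,1,1]$ is an inverse of $t$, so $a^*:=x\varphi$ is an inverse of $a$ lying in every regular $R\supseteq S\varphi$; but $xt=[1,2,2]$ and $v(xt)=[2,1,2]\neq t$, so $ba^*a=(v(xt),v(xt),\sigma)\neq a$. So the step ``$bq=a$ with $q=a^*a$'' fails: the external idempotent $f$ with $bf=a$ and the internal idempotent $a^*a$ lie in the same $\mathcal{L}$-class but $b$ need not act the same way on them, and no left-zero gymnastics can bridge that.

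What is missing is a choice of the idempotent witness \emph{adapted to} $b$ while still landing in $R$, and this is exactly what the paper engineers by hand. Writing $(s,t,\alpha)\le^{T(S)}(u,v,\beta)$, Proposition \ref{greenTS}.$(vi)$ gives $\alpha=\beta$, $t\le v$ and $(s,u)=a(t,v)$; regularity of $S$ then yields idempotents $e,f\in E(S)$ with $t=ev=vf$ and $e\,\mathcal{R}\,t\,\mathcal{L}\,f$, and an inverse $t'$ of $t$ chosen so that $tt'=e$, $t't=f$. The elements $(ae,e,\sigma)=(s,t,\alpha)\,(t',t',\sigma)$ and $(f,f,\alpha)=(t',t',\sigma)\,(s,t,\alpha)$ are then idempotents of $R$ (products of elements of $R$ with $t'\varphi\in S\varphi$) satisfying $(ae,e,\sigma)(u,v,\beta)=(s,t,\alpha)=(u,v,\beta)(f,f,\alpha)$, which gives $(s,t,\alpha)\le^R(u,v,\beta)$. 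The crucial difference from your attempt is that the inverse $t'$ is not arbitrary: it is matched to the pair $e,f$ coming from the Hartwig--Nambooripad description of $t\le v$ in $S$, so the resulting idempotents simultaneously lie in $R$ and absorb $b$ correctly. Your $\mathcal{D}$/$\mathcal{J}$ arguments stand, but the $\le$ case needs to be replaced by an argument of this coordinate-level kind (or at least by a choice of inverse compatible with $b$, whose existence in $R$ you would still have to prove).
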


\begin{proof}
As observed above, we only need to show this proposition for $\mathcal{K}\in\{\mathcal{D}, \mathcal{J}\}$. It is also clear that
\[\mathcal{K}^R\subseteq\mathcal{K}^{T(S)}\cap R\quad\mbox{ and }\quad \leq^{R}\,\subseteq\, \leq^{T(S)}\cap R\; .\]
Let $(s,t,\alpha),(u,v,\beta)\in R$ such that $(s,t,\alpha)\mathcal{J}^{T(S)} (u,v,\beta)$. Then 
\[(s,t,\alpha)\mathcal{L}^R (t,t,\alpha),\quad (u,v,\beta)\mathcal{L}^R (v,v,\beta)\quad\mbox{ and }\quad (t,t,\alpha)\mathcal{J}^{S\varphi} (v,v,\beta)\; .\]
Hence $(s,t,\alpha)\mathcal{J}^R (u,v,\beta)$, and  $\mathcal{J}^{T(S)}\cap R=\mathcal{J}^R$. The proof for $\mathcal{K}=\mathcal{D}$ is similar.

Assume now that $(s,t,\alpha)\leq^{T(S)} (u,v,\beta)$ with $(s,t,\alpha),(u,v,\beta)\in R$. By Proposition \ref{greenTS}.$(vi)$ we know that $\alpha=\beta$, $t\leq v$ and $(s,u)=a(t,v)$ for some $a\in S^1$. Since $S$ is regular, there are idempotents $e,f\in S$ such that $t=ev=vf$ and $e\mathcal{R}t\mathcal{L} f$. Let $t'$ be the inverse of $t$ such that $tt'=e$ and $t't=f$. Then $(ae,e,\sigma)=(s,t,\alpha) (t',t',\sigma)\in R$ and $(f,f,\alpha)=(t',t',\sigma) (s,t,\alpha)\in R$. Observe now that $(ae,e,\sigma)$ and $(f,f,\alpha)$ are idempotents of $R$ such that
\[(ae,e,\sigma)(u,v,\beta)=(s,t,\alpha)\quad\mbox{ and }\quad (u,v,\beta)(f,f,\alpha)=(s,t,\alpha)\, .\]
Thus $(s,t,\alpha)\leq^R (u,v,\beta)$, and $\leq^{T(S)}\cap R=\leq^{R}\,$. 
\end{proof}

Let $R(S)=\{(s,t,\alpha)\in T(S)\, :\; s\mathcal{L}t\,\}$. Since $\mathcal{L}$ is a right congruence on $S$, $R(S)$ is a subsemigroup of $T(S)$. The idempotent of $R(S)$ are the elements $(s,t,\alpha)\in R(S)$ such that either $\alpha=\sigma$ and $t\in E(S)$, or $\alpha=\tau$ and $s\in E(S)$.

\begin{prop}\label{greenRS}
Let $S$ be a regular semigroup. Then $R(S)$ is a regular semiband of depth 2 and $\varphi:S \longrightarrow R(S),\, s\longmapsto (s,s,\sigma)$ embeds $S$ into $R(S)$. Further,
\[\mathcal{K}^{T(S)}\cap R(S)= \mathcal{K}^{R(S)} \quad\mbox{ and }\quad \leq^{T(S)}\cap R(S)=\leq^{R(S)}\]
for $\mathcal{K}\in\{\mathcal{H}, \mathcal{L}, \mathcal{R},\mathcal{D},\mathcal{J}\}$.
\end{prop}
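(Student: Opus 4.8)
The plan is to reduce the Green's-relation and natural-partial-order identities to Proposition~\ref{regsubsem}, and to establish the structural claims by hand. The easy parts come first. Since $\mathcal{L}$ is a right congruence, $R(S)$ is closed under the product of $T(S)$, as already noted before the statement, so it is a subsemigroup; and since $s\,\mathcal{L}\,s$ for every $s\in S$, we have $(s,s,\sigma)\in R(S)$, so $\varphi$ — which already embeds $S$ into $T(S)\supseteq R(S)$ — embeds $S$ into $R(S)$. For regularity, given $(s,t,\alpha)\in R(S)$ with $s\,\mathcal{L}\,t$, I would pick an inverse $t'$ of $t$ in $S$; the same computation as in the proof of Proposition~\ref{finperreg} gives $(s,t,\alpha)(t',t',\sigma)(s,t,\alpha)=(s,t,\alpha)$, absorbing $t't$ on the left via $s\in S^1t$. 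As $(t',t',\sigma)\in R(S)$, every element of $R(S)$ is regular in $R(S)$, so $R(S)$ is regular.

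The main work is the depth-$2$ claim: every element of $R(S)$ is a product of two idempotents of $R(S)$. Recall that the idempotents of $R(S)$ are the triples $(a,e,\sigma)$ with $e\in E(S)$ and $a\,\mathcal{L}\,e$, and the triples $(e,a,\tau)$ with $e\in E(S)$ and $e\,\mathcal{L}\,a$. I would split into two cases. For $(s,t,\sigma)$ with $s\,\mathcal{L}\,t$, choose an inverse $s'$ of $s$ and set $f=ss'$, $g=s's$; then $(s,t,\sigma)=(f,ts',\tau)(s,g,\sigma)$. For $(s,t,\tau)$, choose an inverse $t'$ of $t$ and set $f=tt'$, $g=t't$; then $(s,t,\tau)=(st',f,\sigma)(g,t,\tau)$. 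Each identity is a short calculation using $sg=s$ and $tg=t$, which hold because $g\,\mathcal{L}\,s\,\mathcal{L}\,t$. This gives depth at most $2$, and since any non-idempotent $s$ yields the non-idempotent $(s,s,\sigma)\in R(S)$, the depth is exactly $2$ unless $S$ is a band.

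The delicate point, and the step I expect to be the main obstacle, is verifying that the two factors above are genuinely idempotents of $R(S)$, i.e. that $ts'\,\mathcal{L}\,f$ and $st'\,\mathcal{L}\,f$; this is exactly where regularity of $S$ is used. In the $\sigma$-case one has $f\,\mathcal{R}\,s$, $g\,\mathcal{L}\,s$, $g\,\mathcal{R}\,s'$ and $f\,\mathcal{L}\,s'$, so $L_t\cap R_{s'}=L_g\cap R_g$ contains the idempotent $g$; by Green's Lemma the product $ts'$ lies in $R_t\cap L_{s'}$, whence $ts'\,\mathcal{L}\,s'\,\mathcal{L}\,f$, and the symmetric argument handles $st'\,\mathcal{L}\,f$ in the $\tau$-case. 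Once this is in place, both factors satisfy the idempotency and $\mathcal{L}$-conditions characterizing the idempotents of $R(S)$.

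Finally, for the equalities $\mathcal{K}^{T(S)}\cap R(S)=\mathcal{K}^{R(S)}$ and $\leq^{T(S)}\cap R(S)=\leq^{R(S)}$, I would simply observe that $R(S)$ is a regular subsemigroup of $T(S)$ containing $S\varphi$ — established in the previous paragraphs — and quote Proposition~\ref{regsubsem}, which yields precisely these identities for every $\mathcal{K}\in\{\mathcal{H},\mathcal{L},\mathcal{R},\mathcal{D},\mathcal{J}\}$. Thus no new argument is needed for this last part beyond checking the hypotheses of that proposition.
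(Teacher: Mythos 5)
Your proof is correct and follows essentially the same route as the paper: your factorizations $(s,t,\sigma)=(ss',ts',\tau)(s,s's,\sigma)$ and $(s,t,\tau)=(st',tt',\sigma)(t't,t,\tau)$ are exactly the ones used there (with $f,g$ relabelled), and the final equalities are obtained, as in the paper, by quoting Proposition~\ref{regsubsem} after checking that $R(S)$ is a regular subsemigroup of $T(S)$ containing $S\varphi$. The only noteworthy difference is that your Green's Lemma argument for $ts'\,\mathcal{L}\,ss'$ (and $st'\,\mathcal{L}\,tt'$) is heavier than necessary, since $s\,\mathcal{L}\,t$ gives it in one line: writing $t=bs$ and $s=at$ with $a,b\in S^1$, one has $ts'=b(ss')$ and $ss'=a(ts')$.
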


\begin{proof}
Let $s,t\in S$ such that $s\mathcal{L}t$. Let $s'$ and $t'$ be inverses of $s$ and $t$ respectively. Then $(st',tt',\sigma),\,(t't,t,\tau)\in E(R(S))$ and
\[(s,t,\tau)=(st',tt',\sigma)(t't,t,\tau)\, .\]
Similarly $(ss',ts',\tau),\,(s,s's,\sigma)\in E(R(S))$ and 
\[(s,t,\sigma)=(ss',ts',\tau)(s,s's,\sigma).\] 
Thus $R(S)$ is a semiband of depth 2. Observe now that $(t',t',\sigma)$ is an inverse of $(s,t,\sigma)$ and that $(s',s',\tau)$ is an inverse of $(s,t,\tau)$. Hence $R(S)$ is a regular semigroup. Clearly $\varphi$ embeds $S$ into $R(S)$ and the second statement of this proposition is a particular case of Proposition \ref{regsubsem}.
\end{proof}

In the previous section we showed that $T(S)$ preserves many properties of $S$. We can now show that $R(S)$ preserves the same properties. Some of those properties follow immediately from the fact that $S\varphi\subseteq R(S)\subseteq T(S)$ and Proposition \ref{greenRS}, but for others we have to mimic the proof presented in the previous section for $T(S)$. In the next three result we register those properties preserved by $R(S)$, but we shall not include their proofs since is just a question of rephrasing the proofs presented in the previous section.

\begin{prop}
Let $\mathcal{O}$ be any group property and let $\mathcal{Q}$ be any semigroup property. Let $\mathcal{P}$ be one of the following properties: finite, periodic, simple, bisimple, completely simple, semisimple, completely semisimple, [left, right] cryptic, the (maximal) subgroups have property $\mathcal{O}$ or the local submonoids have property $\mathcal{Q}$. Then, a regular semigroup $S$ has property $\mathcal{P}$ \iff\ the regular semigroup $R(S)$ has property $\mathcal{P}$.
\end{prop}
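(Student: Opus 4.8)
The plan is to transfer each argument of Section 4 essentially verbatim, the point being that $R(S)$ sits between $S\varphi$ and $T(S)$ and that, by Proposition \ref{greenRS}, every Green's relation and the natural partial order on $R(S)$ is just the restriction to $R(S)$ of the corresponding relation on $T(S)$. First I would record the $R(S)$-analogue of Corollary \ref{greenrestriction}: combining $\mathcal{K}^{R(S)}=\mathcal{K}^{T(S)}\cap R(S)$ with $\mathcal{K}^{T(S)}\cap S\varphi=\mathcal{K}^{S\varphi}$ gives $\mathcal{K}^{R(S)}\cap S\varphi=\mathcal{K}^{S\varphi}$ for $\mathcal{K}\in\{\mathcal{H},\mathcal{R},\mathcal{L},\mathcal{D},\mathcal{J}\}$; and since every $\mathcal{J}$-class (resp. $\mathcal{D}$-class) of $R(S)$ is the trace on $R(S)$ of one of $T(S)$ and meets $S\varphi$ in the point $(t,t,\sigma)$, each $\mathcal{J}$-class (resp. $\mathcal{D}$-class) of $R(S)$ contains exactly one $\mathcal{J}$-class (resp. $\mathcal{D}$-class) of $S\varphi$.

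With this in hand, finiteness and periodicity are immediate: $R(S)$ is a subsemigroup of $T(S)$, so if $S$ (hence $T(S)$, by Proposition \ref{finperreg}) is finite or periodic then so is $R(S)$, while conversely the property descends from $R(S)$ to the isomorphic copy $S\varphi$. The simple and bisimple cases follow at once from Proposition \ref{greenRS} together with Proposition \ref{greenTS}.$(iv)$ and $(v)$, since $(s,t,\alpha)\,\mathcal{J}^{R(S)}\,(u,v,\beta)$ \iff\ $t\mathcal{J}v$ and similarly for $\mathcal{D}$, so $R(S)$ has a single $\mathcal{J}$-class (resp. $\mathcal{D}$-class) exactly when $S$ does. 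For the semisimple case I would reread the proof of Proposition \ref{simple}: the products $(s_1,t_1,\alpha_1)(s_2,t_2,\alpha_2)$ used there stay inside $R(S)$ because $\mathcal{L}$ is a right congruence, so the same argument, now using the $R(S)$-analogue of Corollary \ref{greenrestriction}, shows $S$ semisimple \iff\ $R(S)$ semisimple.

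The one point demanding genuine verification, and the step I expect to be the main obstacle, is the transfer of the maximal-subgroup and local-submonoid isomorphisms. Here I would prove that for every $e\in E(R(S))$ one has $H_e^{R(S)}=H_e^{T(S)}$ and $e\,R(S)\,e=e\,T(S)\,e$; equivalently, that $H_e^{T(S)}$ and $e\,T(S)\,e$ already lie in $R(S)$. Taking $e=(s,t,\sigma)$ (the case of an idempotent $(s,t,\tau)$ being symmetric), so $t\in E(S)$, $s\mathcal{L}t$ and $s=at$, a typical element $(s_1,t_1,\sigma)$ of $H_e^{T(S)}$ satisfies $t_1\,\mathcal{H}\,t$ and $(s,s_1)=a(t,t_1)$, so $s_1=at_1$; using $tt_1=t_1$ (as $t_1\,\mathcal{R}\,t$ and $t$ is idempotent) and $t=bs$ for some $b\in S^1$ (as $s\mathcal{L}t$) one gets $s_1=st_1$ and $t_1=bs_1$, whence $s_1\,\mathcal{L}\,t_1$ and $(s_1,t_1,\sigma)\in R(S)$; an analogous computation gives $e\,T(S)\,e\subseteq R(S)$. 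Once these equalities are established, the isomorphisms $\psi$ and $\chi$ of the proposition on subgroups and local submonoids restrict unchanged, so the maximal subgroups of $R(S)$ are isomorphic to those of $S$ and the local submonoids of $R(S)$ to those of $S$; Corollaries \ref{sgpandlsm} and \ref{evar} then carry over, yielding the cases of property $\mathcal{O}$ on subgroups, property $\mathcal{Q}$ on local submonoids, and completely simple.

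Finally, for completely semisimple and for the [left, right] cryptic cases I would again mimic Proposition \ref{simple}. Since $S$ is semisimple \iff\ $R(S)$ is, completely semisimple reduces to checking that $e\in E(S)$ is a primitive idempotent of a principal factor of $S$ \iff\ $(e,e,\sigma)$ is primitive in the corresponding principal factor of $R(S)$, and the idempotent $(e,e,\sigma)$ and the relevant products all lie in $R(S)$, so the computation of Proposition \ref{simple} applies. For cryptic, the products $(p,q,\gamma)(s,t,\alpha)$ and $(s,t,\alpha)(p,q,\gamma)$ computed in Proposition \ref{simple} remain in $R(S)$ (again because $\mathcal{L}$ is a right congruence), and $\mathcal{H}^{R(S)}\cap S\varphi=\mathcal{H}^{S\varphi}$, so the same argument shows $S$ is [left, right] cryptic \iff\ $R(S)$ is.
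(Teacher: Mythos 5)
Your proposal is correct and follows exactly the route the paper intends: the paper omits this proof, stating that it is ``just a question of rephrasing the proofs presented in the previous section'' for $T(S)$, which is precisely what you carry out using Proposition \ref{greenRS} and the $R(S)$-analogue of Corollary \ref{greenrestriction}. Your explicit verification that $H_e^{T(S)}$ and $e\,T(S)\,e$ already lie in $R(S)$ (so that $H_e^{R(S)}=H_e^{T(S)}$ and $e\,R(S)\,e=e\,T(S)\,e$, using $s=at$, $t=bs$ and $tt_1=t_1$) correctly settles the one point the paper leaves implicit, and the remaining cases restrict to $R(S)$ without change since $R(S)$ is a regular subsemigroup containing $S\varphi$.
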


\begin{prop}
Let $\bf V$ be an e-variety of regular semigroup. Then $S\in\bf LV$ \iff\ $R(S)\in\bf LV$.
\end{prop}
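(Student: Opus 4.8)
The plan is to obtain this statement for $R(S)$ exactly as Corollary \ref{evar} was obtained for $T(S)$, namely as an immediate specialization of the property-preservation result in the previous proposition. First I would observe that, since $S$ is regular, $R(S)$ is a regular semiband by Proposition \ref{greenRS}, so the assertion $R(S)\in\bf LV$ is meaningful to begin with.

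The key step is then to apply the previous proposition with the semigroup property $\mathcal{Q}$ taken to be ``belongs to $\bf V$''. This is a legitimate choice of $\mathcal{Q}$: membership in an e-variety is invariant under isomorphism, and the local submonoids of any regular semigroup are themselves regular, so the test ``lies in $\bf V$'' applies to each of them. With this $\mathcal{Q}$, the previous proposition yields at once that every local submonoid of $S$ lies in $\bf V$ \iff\ every local submonoid of $R(S)$ lies in $\bf V$. By the definition of $\bf LV$ the left condition is precisely $S\in\bf LV$ and the right condition is precisely $R(S)\in\bf LV$, and the proof is complete.

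Thus the genuine content sits not here but in the previous proposition, which rests on the claim that the local submonoids of $R(S)$ are isomorphic to those of $S$. The one point where the $T(S)$ argument must be rephrased rather than quoted --- and which I expect to be the main obstacle --- is checking that passing to the $\mathcal{L}$-constrained subsemigroup $R(S)$ does not shrink the local submonoids. For an idempotent $e=(s,t,\sigma)$ of $R(S)$ (so $t\in E(S)$ and $s\,\mathcal{L}\,t$) one has $eR(S)e=eT(S)e\cap R(S)$, and the worry is whether the extra requirement $s_1\,\mathcal{L}\,t_1$ deletes some of the elements making up $eT(S)e=\{(av,v,\sigma):v\in tSt\}$ (where $s=at$). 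Writing $s=pt$ and $t=qs$ with $p,q\in S^1$ from $s\,\mathcal{L}\,t$, one checks for $v=tut\in tSt$ that $av=sut=p(tut)$ and $tut=q(sut)$, so $av\,\mathcal{L}\,v$; hence $eR(S)e=eT(S)e$ and the isomorphism $\chi$ onto $tSt$ already established for $T(S)$ restricts to an isomorphism of $eR(S)e$ onto $tSt$. The symmetric case $e=(s,t,\tau)$ is identical, and conversely every local submonoid $eSe$ of $S$ is recovered from $(e,e,\sigma)\in E(R(S))$. Once this correspondence is in hand, the analogue of Corollary \ref{sgpandlsm}.$(ii)$ holds for $R(S)$, and the corollary above follows.
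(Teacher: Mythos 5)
Your proposal is correct and follows exactly the route the paper intends: the paper omits the proof, stating that it is ``just a question of rephrasing'' the $T(S)$ arguments of Section~4, and your derivation via the preceding proposition with $\mathcal{Q}$ taken as membership in $\bf V$ mirrors how Corollary~\ref{evar} was obtained from Corollary~\ref{sgpandlsm}. Your explicit verification that $eR(S)e=eT(S)e$ (checking $av\,\mathcal{L}\,v$ for $v\in tSt$ via $s=pt$, $t=qs$, and its $\tau$-dual) is precisely the rephrasing the paper leaves to the reader, and it is carried out correctly.
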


If $S$ is a semigroup with a $0$-element, then we can consider also the semigroup $R^*(S)= R(S)/\overline{0}$. We can easily see that $S$ embeds naturally into $R^*(S)$ and we can obtain a version for $R^*(S)$ of Proposition \ref{0simple}.

\begin{prop}
Let $S$ be a regular semigroup with $0$-element. Then $S$ is 0-simple, 0-bisimple or completely 0-simple \iff\ $R^*(S)$ is respectively 0-simple, 0-bisimple or completely 0-simple.
\end{prop}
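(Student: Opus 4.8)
The plan is to follow the proof of Proposition~\ref{0simple} essentially line for line, replacing $T$ by $R$ and using the $R(S)$-versions of the tools invoked there. First I would dispose of the preliminaries that make $R^*(S)$ meaningful. Since $R(S)$ is a subsemigroup of $T(S)$ containing $\overline{0}=\{(0,0,\sigma),(0,0,\tau)\}$, the same computation as for $T(S)$ shows that $\overline{0}$ is an ideal of $R(S)$ (one has $(s,t,\alpha)(0,0,\gamma)=(0,0,\gamma)$ and $(0,0,\gamma)(s,t,\alpha)=(0,0,\alpha)$), and it is the bottom $\mathcal{J}$-class and therefore the kernel of $R(S)$; thus $R^*(S)=R(S)/\overline{0}$ is well defined and $S$ embeds into it exactly as in the $T^*(S)$ case. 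I would also note that the triples of $R(S)$ with $t=0$ are precisely those of $\overline{0}$, since $s\in S^1t=\{0\}$ forces $s=0$.

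With this settled, the $0$-simple case transfers verbatim. By Proposition~\ref{greenRS} the relation $\mathcal{J}^{R(S)}$ is the restriction of $\mathcal{J}^{T(S)}$ to $R(S)$, so by Proposition~\ref{greenTS}.$(v)$ we have $(s,t,\alpha)\,\mathcal{J}^{R(S)}\,(u,v,\beta)$ \iff\ $t\,\mathcal{J}\,v$. Hence if $S$ is $0$-simple then $\overline{0}$ and $R(S)\setminus\overline{0}$ are the only $\mathcal{J}$-classes of $R(S)$, so $R^*(S)$ is the corresponding principal factor; since $R(S)$ is semisimple (a $0$-simple $S$ is semisimple, and semisimplicity is preserved by $R(S)$ as recorded above), $R^*(S)$ is $0$-simple. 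Conversely, if $R^*(S)$ is $0$-simple then, again by the same $\mathcal{J}$-class description, $S$ has exactly the two $\mathcal{J}$-classes $\{0\}$ and $S\setminus\{0\}$ and is not null (otherwise $R^*(S)$ would be null), so $S$ is $0$-simple. The $0$-bisimple case then follows from this together with the corresponding $\mathcal{D}$-class description, Proposition~\ref{greenRS} restricting Proposition~\ref{greenTS}.$(iv)$, a $0$-bisimple semigroup being a $0$-simple one whose nonzero elements form a single $\mathcal{D}$-class.

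For the completely $0$-simple case I would reduce, exactly as in Proposition~\ref{0simple}, to the equivalence ``$e\in E(S)$ is a primitive idempotent of $S$ \iff\ $(e,e,\sigma)$ is a primitive idempotent of $R^*(S)$'', which then transfers primitivity between the two $0$-simple semigroups. The two facts driving this, namely that $(s,t,\alpha)(e,e,\sigma)=(s,t,\alpha)$ forces $\alpha=\sigma$ and that $(e,e,\sigma)(s,t,\alpha)=(s,t,\alpha)$ forces $s=t$, are statements about the product in $T(S)$ and hold unchanged for the triples of $R(S)$. I do not expect a genuine obstacle here: once the kernel $\overline{0}$ and the Green's relation descriptions are in place, the argument is word for word that of Proposition~\ref{0simple}. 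The only point needing a moment's care is checking that $\overline{0}$ really is the minimal ideal of $R(S)$ and coincides with the bottom $\mathcal{J}$-class, which is where the inclusion $R(S)\subseteq T(S)$ and the defining condition $s\in S^1t$ are used.
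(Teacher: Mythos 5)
Your proposal is correct and is essentially the paper's own argument: the paper explicitly omits the proof for $R^*(S)$, stating it is obtained by rephrasing the proof of Proposition \ref{0simple}, and your transfer --- verifying that $\overline{0}$ is the kernel of $R(S)$, invoking Proposition \ref{greenRS} to restrict the $\mathcal{J}$- and $\mathcal{D}$-class descriptions of Proposition \ref{greenTS}, appealing to preservation of semisimplicity by $R(S)$, and reducing complete $0$-simplicity to the equivalence between primitivity of $e$ in $S$ and of $(e,e,\sigma)$ in $R^*(S)$ via the two product identities --- is exactly that rephrasing. All the auxiliary checks you flag (the ideal computation for $\overline{0}$, $t=0$ forcing $s=0$, and the null-semigroup remark) are correct.
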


There are however properties of $S$ preserved by $R(S)$ that are not preserved by $T(S)$. One of those properties is the complete regularity. For example, if $S=\{0,1\}$ with the usual product, then $(0,1,\tau)\in T(S)$ and $(0,1,\tau)^2=(0,0,\tau)$ is not $\mathcal{H}$-related to $(0,1,\tau)$. Hence $T(S)$ is not completely regular. We shall prove next that if $S$ is completely regular, then $R(S)$ is also completely regular.

\begin{prop}
A regular semigroup $S$ is completely regular \iff\ $R(S)$ is completely regular.
\end{prop}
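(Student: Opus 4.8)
We must show that a regular semigroup $S$ is completely regular \iff\ $R(S)$ is completely regular. Recall that a regular semigroup is completely regular precisely when it is a union of its (maximal) subgroups, equivalently when every element is $\mathcal{H}$-related to its square, equivalently when every $\mathcal{H}$-class containing an idempotent is a group and every element lies in such an $\mathcal{H}$-class. The cleanest criterion to use here is: $S$ is completely regular \iff\ every element $a\in S$ satisfies $a\,\mathcal{H}\,a^2$ (equivalently $a\in H_a$ with $H_a$ a group). I plan to translate this condition through the explicit description of $\mathcal{H}$ on $T(S)$ given in Proposition \ref{greenTS}.$(iii)$, restricted to $R(S)$ via Proposition \ref{greenRS}.

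**Plan.** First I would record the square of a typical element of $R(S)$. For $(s,t,\sigma)\in R(S)$ (so $s\,\mathcal{L}\,t$) we have $(s,t,\sigma)^2=(st,t^2,\sigma)$, and for $(s,t,\tau)$ we have $(s,t,\tau)^2=(s^2,ts,\tau)$. The element $(s,t,\sigma)^2$ is again in $R(S)$ because $st\,\mathcal{L}\,t^2$ (as $\mathcal{L}$ is a right congruence and $s\,\mathcal{L}\,t$); similarly for the $\tau$ case. Next, using Proposition \ref{greenTS}.$(iii)$ together with $\mathcal{H}^{T(S)}\cap R(S)=\mathcal{H}^{R(S)}$ from Proposition \ref{greenRS}, I would spell out exactly when $(s,t,\alpha)\,\mathcal{H}\,(s,t,\alpha)^2$ holds in $R(S)$. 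For $\alpha=\sigma$ this requires $t\,\mathcal{H}\,t^2$ in $S$ together with a scalar $a\in S^1$ with $(s,st)=a(t,t^2)$; for $\alpha=\tau$ it requires $s\,\mathcal{H}\,s^2$ in $S$ together with the analogous scalar condition.

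**The two directions.** For the forward implication, assume $S$ is completely regular. Given $(s,t,\sigma)\in R(S)$, since $S$ is completely regular we have $t\,\mathcal{H}\,t^2$, so the first coordinate condition of Proposition \ref{greenTS}.$(iii)$ is met; for the scalar condition I would exploit $s\,\mathcal{L}\,t$ to write $s=xt$ for some $x\in S^1$, whence $(s,st)=(xt,xt\cdot t)=x(t,t^2)$, giving exactly the needed $a=x$. Thus $(s,t,\sigma)\,\mathcal{H}\,(s,t,\sigma)^2$ in $T(S)$, hence in $R(S)$. The $\tau$ case is symmetric, using $s\,\mathcal{H}\,s^2$ and writing $t=ys$ so that $(ts,?)$ matches $(s,t,\tau)^2$ appropriately. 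Therefore every element of $R(S)$ is $\mathcal{H}$-related to its square and $R(S)$ is completely regular. For the converse, assume $R(S)$ is completely regular. Given any $t\in S$, the element $t\varphi=(t,t,\sigma)$ lies in $R(S)$ (indeed in $S\varphi$), so $(t,t,\sigma)\,\mathcal{H}\,(t,t,\sigma)^2=(t^2,t^2,\sigma)$ in $R(S)\subseteq T(S)$; by Proposition \ref{greenTS}.$(iii)$ this forces $t\,\mathcal{H}\,t^2$ in $S$. Hence $S$ is completely regular.

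**Main obstacle.** The only genuinely delicate point is verifying the scalar conditions $(s,u)=a(t,v)$ in the forward direction: completeness of $S$ supplies the $\mathcal{H}$-relation on the second/first coordinate, but one must separately produce the common left multiplier $a\in S^1$ relating the whole pairs, and here the defining relation $s\,\mathcal{L}\,t$ of $R(S)$ is exactly what makes this possible (in $T(S)$ without the $\mathcal{L}$-restriction one cannot generally do this, which is precisely why $T(S)$ fails to preserve complete regularity, as the example $S=\{0,1\}$ shows). I expect this coordinate bookkeeping — and checking that the scalar chosen for the $\sigma$ case transfers correctly to the $\tau$ case under the identification of Lemma \ref{pre-green}.$(i)$ — to be the part demanding the most care, though it remains a routine verification once $s\,\mathcal{L}\,t$ is used to pick the multiplier.
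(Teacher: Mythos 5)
Your proposal is correct in substance but takes a genuinely different route from the paper. The paper proves only the forward direction directly, and does so by exhibiting an explicit idempotent in the $\mathcal{H}$-class of each element of $R(S)$: taking $e\in H_s\cap E(S)$ and $f\in H_t\cap E(S)$, it checks that $(af,f,\sigma)$ and $(e,be,\tau)$ are idempotents $\mathcal{H}$-related to $(s,t,\sigma)$ and $(s,t,\tau)$ respectively; the converse is dispatched in one line by noting that a regular subsemigroup of a completely regular semigroup is completely regular. You instead use the criterion $a\,\mathcal{H}\,a^2$ together with the coordinate description of $\mathcal{H}$ in Proposition \ref{greenTS}.$(iii)$ and the restriction result of Proposition \ref{greenRS}. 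Your route avoids choosing idempotents and group inverses and reduces everything to the scalar bookkeeping $(s,u)=a(t,v)$, where, as you correctly isolate, the hypothesis $s\,\mathcal{L}\,t$ defining $R(S)$ is exactly what supplies the common multiplier; the paper's route is shorter at the converse and produces the idempotent of each $\mathcal{H}$-class explicitly, which is mildly more informative. Your converse via $(t,t,\sigma)\,\mathcal{H}\,(t^2,t^2,\sigma)$ and Proposition \ref{greenTS}.$(iii)$ is also fine.

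One slip needs repair: the $\tau$ case is not literally ``symmetric'', because Proposition \ref{greenTS}.$(iii)$ tests the \emph{second} coordinates for both values of $\alpha$. For $(s,t,\tau)$ against $(s,t,\tau)^2=(s^2,ts,\tau)$ the condition is $t\,\mathcal{H}\,ts$ together with $(s,s^2)=a(t,ts)$, not ``$s\,\mathcal{H}\,s^2$ plus the analogous scalar condition'' as you state. Fortunately both hold under your hypotheses: writing $s=xt$, the scalar $a=x$ works since $x(ts)=(xt)s=s^2$; for the $\mathcal{H}$-condition, $ts\,\mathcal{L}\,s^2\,\mathcal{L}\,s\,\mathcal{L}\,t$ because $\mathcal{L}$ is a right congruence and $s\,\mathcal{H}\,s^2$ by complete regularity, while $ts\cdot s^{-1}=t(ss^{-1})=t$ (the idempotent $ss^{-1}$ of the group $H_s$ is a right identity for $L_s=L_t$), so $t\,\mathcal{R}\,ts$. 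Here $s^{-1}$ denotes the inverse of $s$ in $H_s$, so this step genuinely uses complete regularity of $S$ at $s$ and the relation $s\,\mathcal{L}\,t$, not merely $s\,\mathcal{H}\,s^2$. With this paragraph inserted, your argument is complete.
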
 

\begin{proof}
We just have to prove the direct implication since a regular subsemigroup of a completely regular semigroup is again completely regular. Let $s,t\in S$ such that $s\mathcal{L}t$, that is, $s=at$ and $t=bs$ for some $a,b\in S$. Let $e\in H_s\cap E(S)$ and $f\in H_t\cap E(S)$. Then $be\in H_t$ and $abe=abss^{-1}=ats^{-1}= ss^{-1}=e$ where $s^{-1}$ is the inverse of $s$ in the group $H_s$. It is now clear that $(af,f,\sigma)$ and $(e,be,\tau)$ are idempotents of $R(S)$. Further $(af,f,\sigma) \mathcal{H} (s,t,\sigma)$ and $(e,be,\tau)\mathcal{H} (s,t,\tau)$. Thus $R(S)$ is completely regular.
\end{proof}

The previous result allows to conclude that every (finite) complete regular semigroup is embeddable into a (finite) complete regular semiband. This result was first proved by Pastijn \cite{pastijn77} using a subsemiband of $A(S)$ (see also \cite{almeidamoura} for another proof using Petrich's embeding \cite{petrich04} instead). If $S$ is a completely regular monoid, then $S$ is a semilattice $Y$ of completely simple semigroups $D_\alpha$, $\alpha\in Y$. Pastijn \cite[Theorem 3.5]{pastijn77} showed that the subset
\[A_1=\{(\ov{s}\ov{h}\ov{t})\rho,\,(\ov{s}\ov{h}\ov{t}\ov{h})\rho\,:\; s\in D_\nu,\; t\in D_\mu,\; \nu,\mu\in Y,\;\nu\geq\mu\,\}\, .\]
of $A(S)$ is a complete regular subsemiband. The mapping $\psi_1:\,s \longrightarrow (\ov{1}\ov{h}\ov{s})\rho$ embeds $S$ into $A_1$ (in \cite{pastijn77} was used the embedding $s\longrightarrow (\ov{1}\ov{h}\ov{s} \ov{h})\rho$ instead). Let $s\in D_\nu$ and $t\in D_\mu$ with $\nu\geq\mu$ and let $s_1=stt^{-1}=st^{-1}t$. Then $s_1\mathcal{L} t$, and $(\ov{s}\ov{h}\ov{t})\rho= (\ov{s_1}\ov{h}\ov{t})\rho $ and $(\ov{s}\ov{h}\ov{t}\ov{h})\rho=(\ov{s_1}\ov{h}\ov{t}\ov{h})\rho$. Hence
\[A_1=\{(\ov{s}\ov{h}\ov{t})\rho,\,(\ov{s}\ov{h}\ov{t}\ov{h})\rho\,:\; s\mathcal{L}t\,\}\, .\]
It is now trivial to check that $\psi^{-1}_{|A_1}: A_1\longrightarrow R(S)$ is an isomorphism for $\psi^{-1}:A(S)\longrightarrow T(S)$ the isomorphism defined at the end of Section \ref{section3}. Furthermore, $\psi_1\psi^{-1}=\varphi$ for $\varphi:S\longrightarrow R(S)$ the embedding defined in Proposition \ref{greenRS}. In particular, $A_1$ is a semiband of depth 2 (in \cite{pastijn77} it was shown that every element of $A_1$ is the product of at most 4 idempotents).

Let $\mathcal{T}_X$ be the full transformation semigroup on a set $X$ and let $X'=\{x'\,:\; x\in X\}$ be a set disjoint from $X$ but with the same cardinality. Recall the Higgins' embedding \cite{higgins95} of $\mathcal{T}_X$ into a subsemiband $T$ of $\mathcal{T}_Y$ for $Y=X\cup X'$ (see Section 2). Next, we shall compare the two semibands $R(\mathcal{T}_X)$ and $T$ of depth 2. It is well known that for $\lambda,\mu\in\mathcal{T}_X$, $\lambda\mathcal{L}\mu$ \iff\ $X\lambda=X\mu$. Thus
\[R(\mathcal{T}_X)=\{(\lambda,\mu,\alpha)\in \mathcal{T}_X\times\mathcal{T}_X\times R_2\, :\;X\lambda=X\mu\,\}\,.\]
For $\Delta=(\lambda,\mu,\alpha)\in R(\mathcal{T}_X)$ 
define $\overline{\delta}\in\mathcal{T}_Y$ as follows:
\[\left\{\begin{array}{l}
x\overline{\delta}=x\mu \\ [.2cm]
x'\overline{\delta}=x\lambda
\end{array}\right.\;\mbox{ if } \alpha=\sigma\qquad \mbox{ or }\qquad
\left\{\begin{array}{l}
x\overline{\delta}=(x\mu)' \\ [.2cm]
x'\overline{\delta}=(x\lambda)'
\end{array}\right.\;\mbox{ if } \alpha=\tau\]
Then $\overline{\delta}\in T$ since $X\lambda=X\mu$. Further, $X\overline{\delta}=X'\overline{\delta} \subseteq X$ if $\alpha=\sigma$, and $X\overline{\delta}=X'\overline{\delta} \subseteq X'$ if $\alpha=\tau$. Consider now the mapping $\psi: R(\mathcal{T}_X)\longrightarrow T$ defined by $\Delta\psi=\overline{\delta}$.

\begin{prop}
The semibands $R(\mathcal{T}_X)$ and $T$ are isomorphic and the mapping $\psi$ defined above is an isomorphism from $R(\mathcal{T}_X)$ onto $T$.
\end{prop}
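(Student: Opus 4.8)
The plan is to check that $\psi$ is a bijection and a homomorphism; that $\psi$ actually maps $R(\mathcal{T}_X)$ into $T$ has already been verified above, where it was noted that $X\overline{\delta}=X'\overline{\delta}\subseteq X$ when $\alpha=\sigma$ and $X\overline{\delta}=X'\overline{\delta}\subseteq X'$ when $\alpha=\tau$.

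For bijectivity I would exhibit the inverse explicitly. Given $\beta\in T$, the defining condition of $T$ forces either $X\beta=X'\beta\subseteq X$ or $X\beta=X'\beta\subseteq X'$, and (for nonempty $X$) these alternatives are mutually exclusive, so they single out a value $\alpha\in R_2$ ($\sigma$ in the first case, $\tau$ in the second). In the first case I set $x\mu=x\beta$ and $x\lambda=x'\beta$ for $x\in X$; in the second I set $x\mu$ and $x\lambda$ to be the unique unprimed points with $(x\mu)'=x\beta$ and $(x\lambda)'=x'\beta$. In both cases the equality $X\beta=X'\beta$ yields $X\lambda=X\mu$, so $(\lambda,\mu,\alpha)\in R(\mathcal{T}_X)$ and $(\lambda,\mu,\alpha)\psi=\beta$, proving surjectivity. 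Since this recipe reads the triple $(\lambda,\mu,\alpha)$ off $\overline{\delta}$ uniquely ($\alpha$ from whether the image lies in $X$ or $X'$, then $\mu$ from the action on $X$ and $\lambda$ from the action on $X'$), any two preimages of a given $\beta$ coincide, so $\psi$ is injective as well.

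The substantive point is that $\psi$ is a homomorphism, i.e. $(\Delta_1\Delta_2)\psi=(\Delta_1\psi)(\Delta_2\psi)$ for $\Delta_i=(\lambda_i,\mu_i,\alpha_i)$. The key observation is that in $\overline{\delta_2}$ the second component $\mu_2$ governs the action on unprimed points and the first component $\lambda_2$ governs the action on primed points, while $\alpha_1$ decides whether $\overline{\delta_1}$ sends \emph{everything} into $X$ (when $\alpha_1=\sigma$) or into $X'$ (when $\alpha_1=\tau$). Hence in the composite $\overline{\delta_1}\,\overline{\delta_2}$ the second map is always seen through a single half of itself, namely through $\mu_2$ when $\alpha_1=\sigma$ and through $\lambda_2$ when $\alpha_1=\tau$, which is exactly the selection performed by the right-zero action in the product formula for $T*R_2$. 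I would make this precise by computing, in each of the four cases $(\alpha_1,\alpha_2)\in\{\sigma,\tau\}^2$, the two transformations $\overline{\delta_1}\,\overline{\delta_2}$ and $(\Delta_1\Delta_2)\psi$ on a typical $x$ and on $x'$, and checking that they agree. For example, when $\alpha_1=\sigma$ the product in $R(\mathcal{T}_X)$ is $(\lambda_1\mu_2,\mu_1\mu_2,\alpha_2)$, and chasing $x\mapsto x\mu_1\mapsto(x\mu_1)\mu_2$ and $x'\mapsto x\lambda_1\mapsto(x\lambda_1)\mu_2$, inserting a prime on the output precisely when $\alpha_2=\tau$, reproduces the image of that triple under $\psi$; the case $\alpha_1=\tau$ is identical with $\lambda_2$ replacing $\mu_2$.

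I expect the only real care to lie in the bookkeeping: tracking when a point is primed and confirming that the primes land exactly where the value of $\alpha_2$ dictates. The reason the four cases collapse into one uniform argument is that the right-zero multiplication in $R_2$ mirrors the fact that composition in $\mathcal{T}_Y$ lets the image-type of the first transformation choose which half of the second transformation is applied, so no genuine obstacle remains beyond this routine case analysis.
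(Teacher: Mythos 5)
Your proposal is correct and follows essentially the same route as the paper: the paper likewise reduces the claim to checking that $\psi$ is a homomorphism (dismissing bijectivity as clear, which you usefully make explicit via the inverse map) and verifies the multiplicativity by the case analysis on $(\alpha_1,\alpha_2)$, computing one case in full and noting the others are similar. Your observation that the right-zero action selects $\mu_2$ or $\lambda_2$ according to whether $\overline{\delta_1}$ lands in $X$ or $X'$ is exactly the mechanism underlying the paper's four-case computation.
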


\begin{proof}
The mapping $\psi$ is clearly a bijection. So, we need to show only that $\psi$ is a homomorphism. Let $\Delta_1=(\lambda_1,\mu_1,\alpha_1)$ and $\Delta_2= (\lambda_2,\mu_2,\alpha_2)$ be two elements of $R(\mathcal{T}_X)$, and let $\Delta=\Delta_1\Delta_2$. Let $\overline{\delta_1}=\Delta_1\psi$, $\overline{\delta_2}=\Delta_2 \psi$ and $\overline{\delta} = \Delta\psi$. We want to prove that $\overline{\delta}= \overline{\delta_1}\, \overline{\delta_2}$. If $\alpha_1=\sigma$ and $\alpha_2= \sigma$, then $\Delta=(\lambda_1\mu_2,\mu_1 \mu_2,\sigma)$. Further, for $x\in X$,
\[x\,\overline{\delta_1}\,\overline{\delta_2}= x\mu_1 \overline{\delta_2}= x\mu_1\mu_2= x\overline{\delta} \quad\mbox{ and }\quad x'\,\overline{\delta_1}\, \overline{\delta_2}= x'\lambda_1 \overline{\delta_2}= x'\lambda_1\mu_2=x'\overline{\delta}\, .\]
Thus $\overline{\delta}=\overline{\delta_1} \overline{\delta_2}$ if $\alpha_1=\alpha_2=\sigma$. The other three cases are shown similarly. 
\end{proof}

Let $C$ be a class of semigroups. For $n\geq 2$ let $\sigma_C^{(2)} (n)$ denote the smallest integer $k\geq n$ such that every semigroup of $C$ of order not greater than $n$ can be embeddable into a semiband of depth 2 and order not greater than $k$. Let $G$ be the class of all groups and let $Reg$ be the class of all regular semigroups. Giraldes and Howie \cite{giraldeshowie} showed that
\[\sigma_G^{(2)}(n)\leq 2n^2\qquad\mbox{ and }\qquad \sigma_{Reg}^{(2)}(n)\leq (n+1)^3\, .\] 
We can now improve the upper bound for $\sigma_{Reg}^{(2)}(n)$.

\begin{prop}
$\sigma_{Reg}^{(2)}(n)\leq 2n^2\,$ for all $n\geq 2$.
\end{prop}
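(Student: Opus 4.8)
The plan is to bound the order of the regular semiband $R(S)$ constructed in Proposition~\ref{greenRS}, since that proposition already establishes that $R(S)$ is a regular semiband of depth $2$ into which every regular semigroup $S$ embeds via $\varphi$. Thus it suffices to count the elements of $R(S)$ when $S$ has order $n$, and to show this count is at most $2n^2$. Recall that
\[R(S)=\{(s,t,\alpha)\in T(S)\, :\; s\mathcal{L}t\,\}=\{(s,t,\alpha)\in S\times S\times R_2\, :\; s\mathcal{L}t \mbox{ and } s\in S^1 t\}\, .\]
First I would observe that the condition $s\mathcal{L}t$ already forces $s\in S^1t$ (indeed $s\mathcal{L}t$ gives $s\in S^1t$ and $t\in S^1s$), so the membership constraint inherited from $T(S)$ is automatic here and
\[R(S)=\{(s,t,\alpha)\in S\times S\times R_2\, :\; s\mathcal{L}t\,\}\, .\]

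The core of the argument is a counting estimate for the number of pairs $(s,t)\in S\times S$ with $s\mathcal{L}t$. The relation $\mathcal{L}$ partitions $S$ into its $\mathcal{L}$-classes; if these classes have sizes $\ell_1,\ell_2,\dots,\ell_m$ with $\sum_i \ell_i=n$, then the number of $\mathcal{L}$-related pairs is exactly $\sum_i \ell_i^2$. The key point I would emphasize is that, for any partition of $n$ into positive parts, one has $\sum_i \ell_i^2\leq \left(\sum_i \ell_i\right)^2=n^2$, since all cross terms $2\ell_i\ell_j$ (for $i\neq j$) are nonnegative. Hence the number of $\mathcal{L}$-related pairs $(s,t)$ is at most $n^2$. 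Since each such pair yields exactly two elements of $R(S)$, one with $\alpha=\sigma$ and one with $\alpha=\tau$, we obtain
\[|R(S)|=2\sum_{i=1}^m \ell_i^2 \leq 2n^2\, .\]

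Finally I would assemble these observations: for every regular semigroup $S$ of order $n$, the semiband $R(S)$ is regular of depth $2$ (Proposition~\ref{greenRS}), has order at most $2n^2$ by the count above, and contains an isomorphic copy of $S$. By the definition of $\sigma_{Reg}^{(2)}(n)$ as the smallest such bound valid for the whole class, this gives $\sigma_{Reg}^{(2)}(n)\leq 2n^2$ for all $n\geq 2$, improving the bound $(n+1)^3$ of Giraldes and Howie. The argument is essentially immediate once the size of $R(S)$ is computed, so I do not anticipate a genuine obstacle; the only point requiring a moment of care is the elementary inequality $\sum_i \ell_i^2\leq n^2$, which must be justified (rather than the weaker per-class bound), and the verification that the constraint $s\in S^1t$ is redundant once $s\mathcal{L}t$ is assumed.
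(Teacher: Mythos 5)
Your proof is correct and follows essentially the same route as the paper, which simply observes that $R(S)$ (already shown in Proposition~5.2 to be a regular semiband of depth~2 containing $S\varphi$) has order at most $2n^2$; the paper gets this from the trivial containment $R(S)\subseteq S\times S\times R_2$, whereas you compute the exact count $2\sum_i\ell_i^2$ over the $\mathcal{L}$-classes and then bound it, which is a harmless refinement of the same idea. Your side remarks (that $s\,\mathcal{L}\,t$ makes the condition $s\in S^1t$ redundant, and that the inequality $\sum_i\ell_i^2\le n^2$ needs justification) are both correct but not needed for the crude bound the statement requires.
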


\begin{proof}
We just need to observe that $R(S)$ has order less than or equal to $2n^2$ if $S$ has order $n$.
\end{proof}

Let $S$ be regular semigroup of order $n$. Note that $R(S)$ has order $2n^2$ only when $S$ is a left group; but if $S$ is not a left group, then the order of $R(S)$ can decrease significantly. Note further that we can do a left-right dual construction of $R(S)$ and obtain another semiband $L(S)$ of depth 2 in which $S$ can be embedded. We can check also that $L(S)$ has order $2n^2$ only when $S$ is a right group. Therefore, if $S$ is a non-group regular semigroup of order $n$, then $S$ can be embedded in a regular semiband of depth 2 of order less than $2n^2$. It seems reasonable to define now the integer 
\[\sigma_C^{(2)}(n,m)\]
as follows. For a class $C$ of regular semigroups and an integer $n\geq 2$, let $\sigma_C^{(2)}(n,m)$ denote the smallest integer $k\geq n$ such that every semigroup of $C$ of order not greater than $n$ and with subgroups of order not greater that $m$, can be embedded into a semiband of depth 2 of order not greater than $k$. Of course that $\sigma_C^{(2)}(n,m)$ makes sense only for classes $C$ of regular semigroups containing other semigroups besides groups.

\begin{prop}
Let $C$ be a class of regular semigroups. Then
\[\sigma_C^{(2)}(n,m)\leq 2n\sqrt{n}\sqrt{m}= 2\sqrt{n^3m}\, .\]
\end{prop}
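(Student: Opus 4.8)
The plan is to refine the global choice between $R(S)$ and its left--right dual $L(S)$ down to the level of individual $\mathcal{D}$-classes. Fix a regular $S$ of order at most $n$ whose subgroups have order at most $m$, and for each $\mathcal{D}$-class $D$ write $a_D$, $b_D$ and $g_D\le m$ for its number of $\mathcal{R}$-classes, its number of $\mathcal{L}$-classes and its maximal subgroup order, so that $|D|=a_Db_Dg_D$, each $\mathcal{L}$-class of $D$ has size $\ell_D=a_Dg_D$ and each $\mathcal{R}$-class size $r_D=b_Dg_D$. Since $R(S)=\{(s,t,\alpha):s\,\mathcal{L}\,t\}$, summing the squares of the $\mathcal{L}$-class sizes gives $|R(S)|=2\sum_i\ell_i^2=2\sum_D|D|\,\ell_D$, and dually $|L(S)|=2\sum_D|D|\,r_D$. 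Neither of these is good enough on its own (for a band that is the union of one left-zero and one right-zero $\mathcal{D}$-class both equal $\approx n^2/2$), so what I would aim to build instead is a depth-$2$ semiband that pays, for each $\mathcal{D}$-class separately, only the cheaper of the two directions, i.e.\ of order $2\sum_D|D|\min(\ell_D,r_D)$.

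Granting such a construction, the size estimate is immediate. Since $\min(\ell_D,r_D)\le\sqrt{\ell_D r_D}=\sqrt{|D|\,g_D}\le\sqrt{|D|\,m}$, we obtain
\[\sum_D|D|\min(\ell_D,r_D)\le\sqrt{m}\sum_D|D|^{3/2}\le\sqrt{m}\Big(\sum_D|D|\Big)^{3/2}=\sqrt{m}\,n^{3/2},\]
the middle step being superadditivity of $t\mapsto t^{3/2}$. Hence the order is at most $2\sqrt{n^3m}$ and $S$ embeds via the restriction of $\varphi$.

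The hard part is the construction itself, and this is where I expect the real obstacle to lie. The subsemiband $R(S)$ realises $\mathcal{L}$-thickening on \emph{every} $\mathcal{D}$-class simultaneously, and only $\mathcal{L}$-thickening is available inside $T(S)$: the relation $\mathcal{R}$ is a left but not a right congruence, so $\{(s,t,\alpha):s\,\mathcal{R}\,t\}$ is not closed under the multiplication of $T(S)$ — which is precisely why $L(S)$ has to be manufactured in the dual semidirect product rather than inside $T(S)$. Thus the two choices cannot be combined naively as a single condition on triples of $T(S)$. The difficulty is compounded by the fact, visible in Proposition~\ref{greenTS}, that a product $(s_1,t_1,\alpha_1)(s_2,t_2,\alpha_2)$ has its second coordinate $\mathcal{J}$-below that of either factor, so it may fall into a $\mathcal{D}$-class whose cheaper direction disagrees with those of the factors.

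To get around this I would not try to carve the hybrid out of $T(S)$ directly. Instead I would build it along a principal series of $S$: process the $\mathcal{J}$-classes from the kernel upwards as a tower of ideal extensions, installing on each principal factor an $\mathcal{L}$- or an $\mathcal{R}$-thickened copy according to which of $\ell_D,r_D$ is smaller, and using Proposition~\ref{greenTS} (together with its dual) to certify that each piece is a regular depth-$2$ subsemiband and that the factor embeddings are compatible with the ideal already built below. Verifying that these partial constructions glue into a single associative, idempotent-generated semigroup of depth $2$ containing $S\varphi$ — in particular that every cross-class product lands correctly inside the previously constructed lower ideal regardless of whether the factors were $\mathcal{L}$- or $\mathcal{R}$-thickened — is the step I expect to demand the most care, and is the crux on which the whole bound rests.
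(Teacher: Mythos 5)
Your proposal is not a complete proof: everything rests on the existence of a depth-$2$ semiband of order $2\sum_D|D|\min(\ell_D,r_D)$ containing $S\varphi$, and that object is never constructed --- you flag it yourself as the crux. The obstruction you identify is real: inside $T(S)$ only $\mathcal{L}$-thickening is available (closure of $R(S)$ under $(s,t,\sigma)(u,v,\sigma)=(sv,tv,\sigma)$ uses exactly that $\mathcal{L}$ is a right congruence), so an $\mathcal{R}$-thickened block is not closed there, and a mixed product drops to a lower $\mathcal{J}$-class whose cheaper direction may disagree with those of the factors. Your proposed repair --- gluing $\mathcal{L}$- or $\mathcal{R}$-thickened principal factors along a principal series by ideal extensions --- is only named, not executed: you specify no multiplication for cross-factor products, verify no associativity, and give no argument that the glued semigroup is still generated by idempotents in two steps. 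Granting the construction, your arithmetic is correct ($\ell_D r_D=|D|g_D\le |D|m$, then superadditivity of $t\mapsto t^{3/2}$), but as it stands this is a programme with the decisive step missing.

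The comparison with the paper is, however, more interesting than a simple ``incomplete versus complete.'' The paper's proof is a three-line global argument: with $l$ and $r$ the sizes of the largest $\mathcal{L}$-class and largest $\mathcal{R}$-class of $S$, it bounds $|R(S)|\le 2nl$ and $|L(S)|\le 2nr$, asserts $lr\le mn$, and takes whichever semiband corresponds to $h=\min\{l,r\}\le\sqrt{mn}$. Your preliminary example refutes precisely this assertion. For the band $B$ of order $n=2k$ that is the union of a left-zero band $L_0$ of order $k$ and a right-zero ideal $R_0$ of order $k$ (with $xy=yx=y$ for $x\in L_0$, $y\in R_0$), one has $m=1$, the largest $\mathcal{L}$-class is $L_0$ and the largest $\mathcal{R}$-class is $R_0$, so $lr=k^2>mn=2k$ and $h=k>\sqrt{mn}$ as soon as $k\ge 3$; correspondingly $\min(|R(B)|,|L(B)|)=2(k^2+k)$, which exceeds $2\sqrt{n^3m}$ for $k\ge 9$. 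The inequality $|L|\,|R|\le mn$ is valid only when the $\mathcal{L}$-class $L$ and the $\mathcal{R}$-class $R$ lie in a common $\mathcal{D}$-class $D$, where $|L|\,|R|=|D|g_D$ --- exactly the per-class inequality your computation uses --- and the extremal classes need not share a $\mathcal{D}$-class. So the paper's argument is sound only under an implicit hypothesis (for instance $S$ bisimple or completely simple, where there is a single $\mathcal{D}$-class), and in full generality something like the hybrid construction you sketch, or a different argument altogether, is genuinely required. In short: you have correctly diagnosed the difficulty that the paper's own proof overlooks, but you have not resolved it, so the stated bound remains unproven on both routes.
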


\begin{proof}
Let $S\in C$ be a regular semigroup of order not greater than $n$ and with subgroups of order not greater than $m$. If $l$ and $r$ are respectively the sizes of the largest $\mathcal{L}$-class and the largest $\mathcal{R}$-class of $S$, then $R(S)$ has order less than $2nl$ and $L(S)$ has order less than $2nr$. Note that $lr\leq mn$, and for $h=\min\{l,r\}$, $h\leq \sqrt{mn}$. Hence $S$ is embeddable into a semiband of depth 2 and order not greater than $2nh\leq 2n\sqrt{n}\sqrt{m}$.
\end{proof}

\vspace*{.5cm}

\noindent{\bf Acknowledgments}: This work was partially funded by the European Regional Development Fund through the programme COMPETE and by the Portuguese Government through the FCT -- Funda\c{c}\~ao para a Ci\^encia e a 
Tecnologia under the project PEst-C/MAT/UI0144/2011.

\begin{flushleft}
Lu\'\i s Oliveira \\
Departamento de Matem\'atica Pura,\\
Faculdade de Ci\^encias da Universidade do Porto,\\
R. Campo Alegre, 687, 4169-007 Porto, Portugal\\
e-mail: loliveir@fc.up.pt
\end{flushleft}

\end{document}